\newtheorem{de}{Définition}[section] 
\newtheorem{prop}{Proposition}[section] 
\begin{document}


\title{Slow Invariant Manifolds as\\
Curvature of the Flow of Dynamical Systems}

\maketitle

\begin{center}

\author{JEAN-MARC GINOUX, BRUNO ROSSETTO, LEON O. CHUA$^{\dag }$\\
\textit{Laboratoire PROTEE,}
\\
\textit{I.U.T. of Toulon, Université du Sud, }
\\
\textit{B.P. 20132,
83957, LA GARDE Cedex, France}
\\

$^{\dag }$\textit{EECS Department University of California,
Berkeley} \\

\textit{253 Cory Hall {\#}1770, Berkeley, CA 94720-1770}

\textcolor{blue}{ginoux@univ-tln.fr},
\textcolor{blue}{rossetto@univ-tln.fr}
\textcolor{blue}{chua@eecs.berkeley.edu}}

\end{center}

\begin{center}
website: \href{http://ginoux.univ-tln.fr}{http://ginoux.univ-tln.fr},
\href{http://rossetto.univ-tln.fr}{http://rossetto.univ-tln.fr},
\href{http://www.eecs.berkeley.edu/~chua}{http://www.eecs.berkeley.edu/$\sim
$chua}
\end{center}

\begin{abstract}

Considering trajectory curves, integral of n-dimensional dynamical
systems, within the framework of Differential Geometry as curves in
Euclidean n-space it will be established in this article that the
curvature of the flow, i.e., the curvature of the trajectory curves
of any n-dimensional dynamical system directly provides its slow
manifold analytical equation the invariance of which will be then
proved according to Darboux theory. Thus, it will be stated that the
flow curvature method, which uses neither eigenvectors nor
asymptotic expansions but only involves time derivatives of the
velocity vector field, constitutes a general method simplifying and
improving the slow invariant manifold analytical equation
determination of high-dimensional dynamical systems. Moreover, it
will be shown that this method generalizes the Tangent Linear System
Approximation and encompasses the so-called Geometric Singular
Perturbation Theory. Then, slow invariant manifolds analytical
equation of paradigmatic Chua's piecewise linear and cubic models of
dimensions three, four, and five will be provided as tutorial
examples exemplifying this method as well as those of
high-dimensional dynamical systems.

\vspace{0.1in}

\textit{Keywords}: differential geometry; curvature; torsion; Gram-Schmidt algorithm; Darboux
invariant.

\end{abstract}

\section{Introduction}

Dynamical systems consisting of \textit{nonlinear} differential
equations are generally not integrable. In his famous memoirs:
\textit{Sur les courbes d\'{e}finies par une \'{e}quation
diff\'{e}rentielle}, Poincar\'{e} [1881-1886] faced to this problem
proposed to study \textit{trajectory curves} properties in the
\textit{phase space}.\\

``\ldots any differential equation can be written as:

\[
\frac{dx_1 }{dt} = X_1 ,
\quad
\frac{dx_2 }{dt} = X_2 , \quad \ldots ,
\quad
\frac{dx_n }{dt} = X_n
\]

where $X$ are integer polynomials.

If $t$ is considered as the time, these equations will define the motion of
a variable point in a space of dimension $n$.''

\begin{flushright}
-- Poincar\'{e} [1885, p. 168] --
\end{flushright}

Let's consider the following system of differential equations
defined in a compact E included in $\mathbb{R}$ as:

\begin{equation}
\label{eq1}
\frac{d\vec {X}}{dt} = \overrightarrow \Im \left( \vec {X} \right)
\end{equation}

with

\[
\vec {X} = \left[ {x_1 ,x_2 ,...,x_n } \right]^t \in E \subset \mathbb{R}^n
\]

and

\[
\overrightarrow \Im \left( \vec {X} \right) = \left[ {f_1 \left( \vec {X}
\right),f_2 \left( \vec {X} \right),...,f_n \left( \vec {X} \right)}
\right]^t \in E \subset \mathbb{R}^n
\]

\vspace{0.2in}

The vector $\overrightarrow \Im \left( \vec {X} \right)$ defines a velocity
vector field in E whose components $f_i $ which are supposed to be
continuous and infinitely differentiable with respect to all $x_i $ and $t$,
i.e., are $C^\infty $ functions in E and with values included in
$\mathbb{R}$, satisfy the assumptions of the Cauchy-Lipschitz theorem. For
more details, see for example Coddington \textit{et al.} [1955]. A solution of this system
is a \textit{trajectory curve} $\vec {X}\left( t \right)$ tangent\footnote{ Except at the \textit{fixed points}.} to
$\overrightarrow \Im $ whose values define the \textit{states} of the \textit{dynamical system} described by the Eq.
(\ref{eq1}). Since none of the components $f_i $ of the velocity vector field
depends here explicitly on time, the system is said to be \textit{autonomous}.

Thus, \textit{trajectory curves} integral of dynamical systems
(\ref{eq1}) regarded as $n$-dimensional \textit{curves}, possess
local metrics properties, namely \textit{curvatures }which can be
analytically\footnote{ Since only time derivatives of the
\textit{trajectory curves} are involved in the \textit{curvature}
formulas. } deduced from the so-called Fr\'{e}net formulas recalled
in the next section. For low dimensions two and three the concept of
\textit{curvatures} may be simply exemplified. A
three-dimensional\footnote{ A two-dimensional curve, i.e., a plane
\textit{curve} has a \textit{torsion} vanishing identically.} curve
for example has two \textit{curvatures: curvature} and
\textit{torsion} which are also known as \textit{first} and
\textit{second curvature}. \textit{Curvature}\footnote{The notion of \textit{curvature} of a plane curve first appears in the work of Apollonius of Perga.} measures, so to speak,
the deviation of the curve from a straight line in the neighbourhood
of any of its points. While the \textit{torsion}\footnote{The name \textit{torsion} is due to L.I. Vallée, \textit{Traité de Géométrie Descriptive}.} measures, roughly
speaking, the magnitude and sense of deviation of the curve from the
\textit{osculating plane}\footnote{ The \textit{osculating plane} is
defined as the plane spanned by the instantaneous velocity and
acceleration vectors.} in the neighbourhood of the corresponding
point of the curve, or, in other words, the rate of change of the
\textit{osculating plane}. Physically, a three-dimensional curve may
be obtained from a straight line by bending (\textit{curvature}) and
twisting (torsion). For high dimensions greater than three, say $n$,
a $n$-dimensional curve has $\left( {n - 1} \right)$
\textit{curvatures} which may be computed while using the
Gram-Schmidt orthogonalization process [Gluck, 1966]. This
procedure, presented in Appendix, also enables to define the
Fr\'{e}net formulas for a $n$-dimensional curve.

In a recent publication [Ginoux \textit{et al.}, 2006] it has been established that the
location of the point where the \textit{curvature of the flow}, i.e., the \textit{curvature} of the \textit{trajectory curves }integral of any
\textit{slow-fast dynamical systems} of low dimensions two and three vanishes directly provides the \textit{slow} \textit{invariant } \textit{manifold}
analytical equation associated to such dynamical systems. So, in this work
the new approach proposed by Ginoux \textit{et al.} [2006] is generalized to
high-dimensional dynamical systems.

The main result of this work presented in the first section
establishes that \textit{curvature of the flow}, i.e.,
\textit{curvature} of \textit{trajectory curves} of any
$n$-dimensional dynamical system directly provides its \textit{slow
manifold} analytical equation the \textit{invariance} of which is
proved according to \textit{Darboux Theorem}.

Then, Chua's piecewise linear models of dimensions three, four and
five are used in the second section to exemplify this result. Indeed
it has been already established [Chua 1986] that such
\textit{slow-fast dynamical systems} exhibit \textit{trajectory
curves} in the shape of \textit{scrolls} lying on
\textit{hyperplanes} the equations of which are well-know. So, it is
possible to analytically compute these \textit{hyperplanes}
equations while using the \textit{curvature of the flow} and then
the comparison leads to a total identity between both equations.
Moreover, it is established in the case of piecewise linear models
that such \textit{hyperplanes} are no more than \textit{osculating
hyperplanes} the invariance of which is stated according to
\textit{Darboux Theorem}. Then, \textit{slow invariant manifolds}
analytical equations of nonlinear high-dimensional dynamical systems
such as fourth-order and fifth-order cubic Chua's circuit [Liu
\textit{et al.}, 2007, Hao \textit{et al.}, 2005]and fifth-order
magnetoconvection system [Knobloch et al., 1981] are directly
provided while using the \textit{curvature of the flow} and
\textit{Darboux Theorem}.

In the discussion, a comparison with various methods of \textit{slow invariant manifold} analytical equation
determination such as \textit{Tangent Linear System Approximation} [Rossetto \textit{et al.}, 1998] and \textit{Geometric Singular Perturbation Theory} [Fenichel, 1979] highlights
that, since it uses neither eigenvectors nor asymptotic expansions but
simply involves time derivatives of the velocity vector field, \textit{curvature of the flow} constitutes
a general method simplifying and improving the \textit{slow invariant manifold} analytical equation
determination of any high-dimensional dynamical systems.

In the appendix, definitions inherent to \textit{Differential
Geometry} such as the concept of $n$-dimensional\textit{ smooth
curves}, \textit{generalized} \textit{Fr\'{e}net frame} and
\textit{curvatures} definitions are briefly recalled as well as the
Gram-Schmidt orthogonalization process for computing
\textit{curvatures} of \textit{trajectory curves} in Euclidean
$n$-space. Then, it is shown that the \textit{flow curvature method}
generalizes the \textit{Tangent Linear System Approximation}
[Rossetto \textit{et al.}, 1998] and encompasses the so-called
\textit{Geometric Singular Perturbation Theory} [Fenichel, 1979].

\section{Slow invariant manifold analytical equation}

The concept of \textit{invariant manifolds} plays a very important role in the stability and structure
of dynamical systems and especially for \textit{slow-fast dynamical systems} or \textit{singularly perturbed systems}. Since the beginning of the
twentieth century it has been subject to a wide range of seminal research.
The classical geometric theory developed originally by Andronov [1937],
Tikhonov [1948] and Levinson [1949] stated that \textit{singularly perturbed systems} possess \textit{invariant manifolds} on which
trajectories evolve slowly and toward which nearby orbits contract
exponentially in time (either forward and backward) in the normal
directions. These manifolds have been called asymptotically stable (or
unstable) \textit{slow manifolds}. Then, Fenichel [1971-1979] theory\footnote{ independently
developed in Hirsch \textit{et al.}, [1977]} for the persistence of normally hyperbolic
invariant manifolds enabled to establish the local invariance of \textit{slow manifolds} that
possess both expanding and contracting directions and which were labeled
\textit{slow invariant manifolds}.

Thus, various methods have been developed in order to determine the \textit{slow invariant}
\textit{manifold} analytical equation associated to \textit{singularly perturbed systems}. The essential works of Wasow [1965],
Cole [1968], O'Malley [1974, 1991] and Fenichel [1971-1979] to name but a
few, gave rise to the so-called \textit{Geometric Singular Perturbation Theory} and the problem for finding the \textit{slow invariant manifold} analytical
equation turned into a regular perturbation problem in which one generally
expected, according to O'Malley [1974 p. 78, 1991 p. 21] the asymptotic
validity of such expansion to breakdown. Another method called: \textit{tangent linear system approximation}, developed
by Rossetto \textit{et al.} [1998], consisted in using the presence of a ``fast''
eigenvalue in the functional jacobian matrix of low-dimensional (2 and 3)
dynamical systems. Within the framework of application of the Tikhonov's
theorem [1952], this method used the fact that in the vicinity of the \textit{slow manifold }the
eigenmode associated with the ``fast'' eigenvalue was evanescent. Thus, the
\textit{tangent linear system approximation} method provided the \textit{slow manifold} analytical equation of low-dimensional dynamical
systems according to the ``slow'' eigenvectors of the \textit{tangent linear system}, i.e., according to
the ``slow'' eigenvalues. Nevertheless, the presence of these eigenvalues
(real or complex conjugated) prevented from expressing this equation
explicitly. Also to solve this problem it was necessary to make such
equation independent of the ``slow'' eigenvalues. This could be carried out
by multiplying it by ``conjugated'' equations leading to a \textit{slow manifold} analytical
equation independent of the ``slow'' eigenvalues of the \textit{tangent linear system}. Then, it was
established in [Ginoux \textit{et al.}, 2006] that the resulting equation was identically
corresponding in dimension two to the \textit{curvature} (\textit{first curvature}) of the flow and in dimension three
to the \textit{torsion} (\textit{second curvature}).

So, in this work the new approach proposed by Ginoux \textit{et al.}
[2006] is generalized to high-dimensional dynamical systems. Thus,
the main result of this work established in the next section is that
\textit{curvature of the flow}, i.e., \textit{curvature} of
\textit{trajectory curves} of any $n$-dimensional dynamical system
directly provides its \textit{slow manifold} analytical equation the
\textit{invariance} of which is established according to
\textit{Darboux Theorem}. Since it uses neither eigenvectors nor
asymptotic expansions but simply involves time derivatives of the
velocity vector field, it constitutes a general method simplifying
and improving the \textit{slow invariant manifold }analytical
equation determination of high-dimensional dynamical systems.

\subsection{Slow manifold of high-dimensional dynamical systems}

In the framework of \textit{Differential Geometry}\footnote{ See appendix for defintions}, \textit{trajectory curves} $\vec
{X}\left( t \right)$ integral of $n$-dimensional dynamical systems (\ref{eq1})
satisfying the assumptions of the Cauchy-Lipschitz theorem may be regarded
as $n$-dimensional \textit{smooth curves}, i.e., \textit{smooth curves} in Euclidean $n-$space \textit{parametrized in terms of time}.

\begin{prop}

\textit{The location of the points where the curvature of the flow,
i.e., the curvature of the trajectory curves of any n-dimensional
dynamical system vanishes directly provides its }$\left( {n - 1}
\right)$\textit{-dimensional slow invariant manifold analytical
equation which reads:}

\begin{equation}
\label{eq2} \phi \left( \vec {X} \right) = \dot {\vec {X}} \cdot
\left( {\ddot {\vec {X}} \wedge \dddot {\vec {X}} \wedge \ldots
\wedge \mathop {\vec {X}}\limits^{\left( n \right)} } \right) =
det\left( {\dot {\vec {X}},\ddot {\vec {X}},\dddot {\vec {X}},\ldots
,\mathop {\vec {X}}\limits^{\left( n \right)} } \right) = 0
\end{equation}

\textit{where} $\mathop {\vec {X}}\limits^{\left( n \right)}
$\textit{ represents the time derivatives of }${\vec {X}} = \left[
{x_1 ,x_2 ,...,x_n } \right]^t.$

\end{prop}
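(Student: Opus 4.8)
The plan is to separate the statement into two parts: an algebraic equivalence (the flow curvature vanishes exactly on the locus $\phi(\vec X)=0$) and a dynamical property (that locus is invariant under~(\ref{eq1})). Throughout I write $V_k=\vec X^{(k)}$ for the $k$-th time derivative of the trajectory curve, so that $V_1=\dot{\vec X}=\overrightarrow\Im(\vec X)$ and $V_{k+1}=\dot V_k$.

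For the first part I would invoke the generalized Fr\'enet apparatus recalled in the Appendix. Running the Gram--Schmidt process on the tuple $(V_1,\dots,V_n)$, the highest curvature $\kappa_{n-1}$ of the curve --- the quantity the authors designate the \emph{curvature of the flow} --- is given by
\begin{equation}
\label{eq:kappa}
\kappa_{n-1}=\frac{\lVert V_1\wedge\cdots\wedge V_{n-2}\rVert\,\lVert V_1\wedge\cdots\wedge V_n\rVert}{\lVert V_1\wedge\cdots\wedge V_{n-1}\rVert^{2}\,\lVert V_1\rVert}.
\end{equation}
Because $V_1,\dots,V_n$ are $n$ vectors of $\mathbb{R}^n$, the top exterior product collapses to a determinant, $\lVert V_1\wedge\cdots\wedge V_n\rVert=\lvert\det(V_1,\dots,V_n)\rvert=\lvert\phi(\vec X)\rvert$. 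Hence, at every point where the curve is non-degenerate (that is, $V_1,\dots,V_{n-1}$ independent, so that the denominator of~(\ref{eq:kappa}) does not vanish), one has $\kappa_{n-1}=0\iff\phi(\vec X)=0$, which is precisely~(\ref{eq2}); for $n=2$ and $n=3$ this reduces to the plane curvature and the torsion used in [Ginoux \textit{et al.}, 2006].

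For the invariance I would differentiate $\phi=\det(V_1,\dots,V_n)$ along the flow. Expanding the derivative column by column and using $\dot V_k=V_{k+1}$,
\begin{equation}
\label{eq:phidot}
\dot\phi=\sum_{k=1}^{n}\det\bigl(V_1,\dots,V_{k-1},\dot V_k,V_{k+1},\dots,V_n\bigr)=\det\bigl(V_1,\dots,V_{n-1},V_{n+1}\bigr),
\end{equation}
every intermediate term dropping out because, for $k\le n-1$, the replacement $\dot V_k=V_{k+1}$ duplicates the $(k+1)$-th column. It then remains to re-express $V_{n+1}$ in the moving frame. Writing $V_{n+1}=\sum_{j=1}^{n}\alpha_j(\vec X)\,V_j$ and substituting into~(\ref{eq:phidot}), multilinearity annihilates every term but $j=n$, so that $\dot\phi=\alpha_n\,\phi$. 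Setting $K(\vec X):=\alpha_n(\vec X)$, this is exactly the Darboux relation $\overrightarrow\Im\cdot\nabla\phi=K\phi$, from which the Darboux Theorem yields the invariance of $\{\phi=0\}$.

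The main obstacle is the last step --- producing the cofactor $K$ as a bona fide function and establishing $\dot\phi=K\phi$ as a genuine identity rather than a relation valid only where $V_1,\dots,V_n$ span $\mathbb{R}^n$. For a linear field $\overrightarrow\Im=A\vec X$ this is immediate: Cayley--Hamilton expresses $V_{n+1}=A^{n+1}\vec X$ through $V_1,\dots,V_n$, the coefficient $\alpha_n$ being (up to sign) a characteristic-polynomial coefficient, so $K$ is constant and the osculating hyperplane is exactly invariant; this also covers each linear piece of the piecewise-linear Chua models. In the genuinely nonlinear case the equivalent question is whether $\det(V_1,\dots,V_{n-1},V_{n+1})$ is divisible by $\phi$, i.e. whether $V_{n+1}$ remains in $\operatorname{span}(V_1,\dots,V_{n-1})$ on $\{\phi=0\}$; controlling this divisibility --- or, more weakly, showing that $\dot\phi$ vanishes on the regular zero set of $\phi$ so that a smooth cofactor exists locally --- is the crux I would expect to require the most care.
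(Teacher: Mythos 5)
Your first half is essentially the paper's own proof of this proposition: the paper likewise runs Gram--Schmidt on the tuple of time derivatives, combines Gluck's formula (\ref{eq50}) with identity (\ref{eq53}), $\left[ {\dot {\vec {X}},\ldots ,\mathop {\vec{X}}\limits^{\left( n \right)} } \right] = \left\| {\vec {u}_1 } \right\|\cdots \left\| {\vec {u}_n } \right\|$, to express the curvatures as in Eq. (\ref{eq4}), and concludes that the top curvature vanishes exactly where the determinant does; your wedge-norm expression for $\kappa_{n-1}$ is the same fact, since $\left\| {\vec {u}_1 } \right\|\cdots \left\| {\vec {u}_k } \right\| = \left\| V_1 \wedge \cdots \wedge V_k \right\|$, and your explicit non-degeneracy caveat is the paper's ``general type'' hypothesis from the appendix made visible. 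Where you genuinely diverge is the invariance half, which the paper defers to the following proposition. Your computation $\dot\phi = \det\left( V_1,\ldots,V_{n-1},V_{n+1} \right)$ is the rigorous reading of the paper's Eq. (\ref{eq6}), whose ellipsis conceals that only the last column survives; but where you then expand $V_{n+1}$ in the frame $\left( V_1,\ldots,V_n \right)$ to extract a cofactor $\alpha_n$ --- an expansion legitimate only off $\{\phi = 0\}$ --- the paper instead \emph{assumes} the Jacobian stationary, $dJ/dt = 0$, so that $\mathop {\vec {X}}\limits^{\left( {n + 1} \right)} = J\mathop {\vec {X}}\limits^{\left( n \right)}$ (Eq. (\ref{eq7})), and applies the trace identity (\ref{eq59}) (the terms containing $JV_k$ with $k < n$ vanish because $JV_k = V_{k+1}$ duplicates a column), yielding the explicit, globally defined cofactor $K = \mathrm{Tr}(J)$. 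The stationarity hypothesis buys the paper a bona fide Darboux cofactor, but it confines exact invariance to systems with $dJ/dt = 0$ --- each linear piece of the Chua models --- and the paper accordingly claims only \emph{local} invariance for the cubic and magnetoconvection examples, near the singular approximation where the Jacobian is approximately stationary. Your route needs no such hypothesis but leaves open exactly the divisibility of $\det\left( V_1,\ldots,V_{n-1},V_{n+1} \right)$ by $\phi$ that you flag at the end; that is precisely the gap the paper closes by assumption rather than by proof, and your Cayley--Hamilton remark in the linear case recovers the paper's $K = \mathrm{Tr}(J)$ conclusion from the other direction. In short: for the proposition as stated your proof coincides with the paper's, and your diagnosis of what remains to be shown for genuine invariance in the nonlinear case is accurate --- indeed sharper than the paper's own account.
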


\begin{proof}

Let's notice that \textit{inner product }(\ref{eq2}) reads:

\[
\dot {\vec {X}} \cdot \left( {\ddot {\vec {X}} \wedge \dddot {\vec
{X}} \wedge \ldots \wedge \mathop {\vec{X}}\limits^{\left( n
\right)} } \right) = \left[ {\dot {\vec {X}},\ddot {\vec {X}},\ldots
,\mathop {\vec {X}}\limits^{\left( n \right)} } \right]
\]

where $\wedge$ represents the \textit{wedge product}.
Let's consider the vectors $\vec {u}_1 \left( t \right)$, $\vec
{u}_2 \left( t \right)$, \ldots , $\vec {u}_i \left( t \right)$
forming an orthogonal basis defined with the Gram-Schmidt process
[Lichnerowicz, 1950 p. 30, Gluck, 1966]. Then, while using identity
(\ref{eq53}) established in appendix

\begin{equation}
\label{eq3} \left[ {\dot {\vec {X}},\ddot {\vec {X}},\ldots ,\mathop
{\vec{X}}\limits^{\left( n \right)} } \right] = \left\| {\vec {u}_1
} \right\|\left\| {\vec {u}_2 } \right\|\ldots \left\| {\vec {u}_n }
\right\|
\end{equation}

\textit{curvature} (\ref{eq50}) may be written:

\begin{equation}
\label{eq4} \kappa _i = \frac{\left\| {\vec {u}_{i + 1} \left( t
\right)} \right\|}{\left\| {\vec {u}_1 \left( t \right)}
\right\|\left\| {\vec {u}_i \left( t \right)} \right\|} =
\frac{\left[ {\dot {\vec {X}},\ddot {\vec {X}},\ldots ,\mathop {\vec
{X}}\limits^{\left( {i + 1} \right)} } \right]}{\left\| {\vec {u}_1
} \right\|^2\left\| {\vec {u}_2 } \right\|\ldots \left\| {\vec
{u}_{i - 1} } \right\|\left\| {\vec {u}_i } \right\|^2}
\end{equation}

\vspace{0.2in}

First and second \textit{curvature}s of space \textit{curves}, i.e., \textit{curvature} (\ref{eq51}) and \textit{torsion} (\ref{eq52}) may be, for example,
found again. Thus, for $i = 1$ identity (\ref{eq53}) provides: $\left[ {\dot {\vec
{X}},\ddot {\vec {X}}} \right] = \left\| {\vec {u}_1 } \right\|\left\| {\vec
{u}_2 } \right\|$ and \textit{curvature} $\kappa _1$ reads:

\[
\kappa _1 = \frac{\left\| {\vec {u}_2 } \right\|}{\left\| {\vec {u}_1 }
\right\|\left\| {\vec {u}_1 } \right\|} = \frac{\left[ {\dot {\vec
{X}},\ddot {\vec {X}}} \right]}{\left\| {\vec {u}_1 } \right\|^3} =
\frac{\left\| {\dot {\vec {X}} \wedge \ddot {\vec {X}}} \right\|}{\left\|
\dot {\vec {X}} \right\|^3} = \frac{\left\| {\vec {\gamma } \wedge
\overrightarrow V } \right\|}{\left\| {\overrightarrow V } \right\|^3}
\]

\vspace{0.2in}

For $i = 2$, while using identity (\ref{eq53}): $\left[ {\dot {\vec {X}},\ddot
{\vec {X}},\dddot {\vec {X}}} \right] = \left\| {\vec {u}_1 }
\right\|\left\| {\vec {u}_2 } \right\|\left\| {\vec {u}_3 } \right\|$, the
Gram-Schmidt orthogonalization process (\ref{eq49}) for the expression of vectors
$\vec {u}_1 \left( t \right)$ and $\vec {u}_2 \left( t \right)$ and the
Lagrange identity $\left\| {\vec {u}_1 } \right\|^2\left\| {\vec {u}_2 }
\right\|^2 = \left\| {\dot {\vec {X}} \wedge \ddot {\vec {X}}} \right\|^2$
\textit{torsion} $\kappa _2 $ reads:

\[
\kappa _2 = \frac{\left\| {\vec {u}_3 \left( t \right)} \right\|}{\left\|
{\vec {u}_1 \left( t \right)} \right\|\left\| {\vec {u}_2 \left( t \right)}
\right\|} = \frac{\left[ {\dot {\vec {X}},\ddot {\vec {X}},\dddot {\vec
{X}}} \right]}{\left\| {\vec {u}_1 } \right\|^2\left\| {\vec {u}_2 }
\right\|^2} = \frac{\dot {\vec {X}} \cdot \left( {\ddot {\vec {X}} \wedge
\dddot {\vec {X}}} \right)}{\left\| {\dot {\vec {X}} \wedge \ddot {\vec
{X}}} \right\|^2} = - \frac{\dot {\vec {\gamma }} \cdot \left( {\vec {\gamma
} \wedge \overrightarrow V } \right)}{\left\| {\vec {\gamma } \wedge
\overrightarrow V } \right\|^2}
\]

Thus, the location of the point where the \textit{curvature} \textit{of the flow} (\ref{eq4}) vanishes, i.e., the location
of the point where the \textit{inner product} vanishes defines a $\left( {n - 1}
\right)$\textit{-dimensional} manifold associated to any $n$-dimensional dynamical system (\ref{eq1}):

\begin{eqnarray}
\phi \left( \vec {X} \right) & = & \left[ {\vec {X},\dot {\vec
{X}},\ddot {\vec{X}},\ldots ,\mathop {\vec {X}}\limits^{\left( n
\right)} } \right]\nonumber \\
& = & \dot {\vec {X}} \cdot \left( {\ddot {\vec {X}} \wedge \dddot
{\vec {X}} \wedge \ldots \wedge \mathop {\vec
{X}}\limits^{\left( n \right)} } \right)\nonumber \\
& = & det\left( {\dot {\vec {X}},\ddot {\vec {X}},\dddot {\vec
{X}},\ldots ,\mathop {\vec {X}}\limits^{\left( n \right)} } \right)
= 0 \end{eqnarray}

\end{proof}

The invariance of such manifold is then established while using the \textit{Darboux Theorem}
presented below.

\subsection{Darboux invariance theorem}

According to Schlomiuk [1993] and Llibre \textit{et al.} [2007] it seems that in his memoir
entitled: \textit{Sur les \'{e}quations diff\'{e}rentielles alg\'{e}briques du premier ordre et du premier degr\'{e},} Gaston Darboux [1878, p. 71, 1878$_{c}$, p.1012] has been the
first to define the concept of \textit{invariant manifold}. Let's consider a $n$-dimensional dynamical
system (\ref{eq1}) describing ``the motion of a variable point in a space of
dimension $n$.'' Let $\vec {X} = \left[ {x_1 ,x_2 ,\ldots ,x_n } \right]^t$ be
the coordinates of this point and $\overrightarrow V = \left[ {\dot {x}_1
,\dot {x}_2 ,\ldots ,\dot {x}_n } \right]^t$ the corresponding velocity
vector.

\begin{prop}

The \textit{manifold} defined by $\phi \left( \vec {X} \right) = 0$
where $\phi $ is a $C^1$ in an open set U is \textit{invariant} with
respect to the flow of (\ref{eq1}) if there exists a $C^1$ function
denoted $K\left( \vec {X} \right)$ and called cofactor which
satisfies:

\begin{equation}
\label{eq5}
L_{\overrightarrow V } \phi \left( \vec {X} \right) = K\left( \vec {X}
\right)\phi \left( \vec {X} \right)
\end{equation}

for all $\vec {X} \in U$ and with the Lie derivative operator defined as:

\[
L_{\overrightarrow V } \phi = \overrightarrow V \cdot
\overrightarrow \nabla \phi = \sum\limits_{i = 1}^n {\frac{\partial
\phi }{\partial x_i }\dot {x}_i } = \frac{d\phi }{dt}.
\]

In the following \textit{invariance} of the \textit{slow manifold} will be established according to what will be
referred as \textit{Darboux Theorem}.

\end{prop}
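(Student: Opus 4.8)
The plan is to reduce the invariance question to a scalar linear ordinary differential equation satisfied along each trajectory. First I would fix an integral curve $\vec{X}(t)$ of system (\ref{eq1}) that meets the manifold at some time $t_0$, i.e. $\phi(\vec{X}(t_0)) = 0$, and that remains inside $U$ on an interval $I$ containing $t_0$. The goal is to show that $\phi(\vec{X}(t)) = 0$ for every $t \in I$, which is precisely the assertion that the manifold $\{\phi = 0\}$ is invariant with respect to the flow of (\ref{eq1}).

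Next I would introduce the composite $\psi(t) := \phi(\vec{X}(t))$, which is $C^1$ on $I$ since $\phi$ is $C^1$ and $\vec{X}(t)$ is a smooth solution. Differentiating along the flow, the chain rule together with the definition of the Lie derivative recalled in the statement yields
\[
\dot{\psi}(t) = \sum_{i=1}^n \frac{\partial \phi}{\partial x_i}\,\dot{x}_i = \bigl(L_{\overrightarrow V}\phi\bigr)\!\bigl(\vec{X}(t)\bigr).
\]
The cofactor hypothesis (\ref{eq5}) then converts this into the homogeneous linear scalar equation $\dot{\psi}(t) = K(\vec{X}(t))\,\psi(t)$, whose coefficient $t \mapsto K(\vec{X}(t))$ is continuous on $I$.

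Finally I would integrate this equation explicitly, obtaining
\[
\psi(t) = \psi(t_0)\,\exp\!\left(\int_{t_0}^{t} K\bigl(\vec{X}(s)\bigr)\,ds\right).
\]
Since $\psi(t_0) = \phi(\vec{X}(t_0)) = 0$ by assumption, and the exponential factor is finite and nonzero (the integrand being continuous on any compact subinterval of $I$), this forces $\psi \equiv 0$ on $I$. Equivalently one may invoke uniqueness for the linear initial value problem: the identically-zero function also solves $\dot{\psi} = K(\vec{X})\,\psi$ with datum $\psi(t_0) = 0$, hence is the only solution. Either way the trajectory remains on $\{\phi = 0\}$, which establishes the claimed invariance.

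I do not anticipate a genuine obstacle; the argument is essentially the observation that $\phi$ restricted to trajectories obeys a linear ODE, so its zero set is preserved. The only points requiring care are keeping the trajectory inside the open set $U$ on which $K$ is defined, so that the integral above is meaningful, and noting that the continuity of $K \circ \vec{X}$ — guaranteed by $K$ being $C^1$ and $\vec{X}$ smooth — is what licenses both the explicit integration and the uniqueness argument.
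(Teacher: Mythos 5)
Your proof is correct, and it is worth noting that it is \emph{not} the argument the paper places under this proposition. You prove the Darboux criterion itself: composing $\phi$ with a trajectory gives $\psi(t)=\phi(\vec{X}(t))$ satisfying the scalar linear equation $\dot{\psi}=K(\vec{X}(t))\,\psi$, and the explicit exponential formula (equivalently, uniqueness for the linear initial value problem) forces $\psi\equiv 0$ once $\psi(t_0)=0$, so the zero set $\{\phi=0\}$ is preserved by the flow. The paper never proves this implication --- it attributes it to Darboux [1878] --- and the ``proof'' it attaches to the proposition is really the \emph{application} of the criterion to the particular $\phi$ of Eq.~(\ref{eq2}): it computes the Lie derivative $L_{\overrightarrow V}\phi$ as the determinant $\left[\dot{\vec{X}},\ddot{\vec{X}},\ldots,\mathop{\vec{X}}\limits^{(n+1)}\right]$ in Eq.~(\ref{eq6}), uses $\ddot{\vec{X}}=J\dot{\vec{X}}$ together with the stationarity assumption $\frac{dJ}{dt}=0$ to replace the $(n+1)$-th derivative by $J\mathop{\vec{X}}\limits^{(n)}$ via Eq.~(\ref{eq7}), and then invokes the trace identity (\ref{eq59}) to obtain $L_{\overrightarrow V}\phi = Tr\left[J\right]\phi$, thereby exhibiting the cofactor $K=Tr\left[J\right]$. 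The two arguments are thus complementary rather than parallel: yours supplies the general ODE-theoretic justification that the cofactor condition (\ref{eq5}) implies invariance --- the step the paper leaves to the citation --- while the paper's computation verifies that the flow-curvature manifold actually admits such a cofactor under $\frac{dJ}{dt}=0$. For the proposition as literally stated, your route is the right and more self-contained one; two minor remarks: $C^1$ regularity of $K$ is more than you need, since continuity of $K\circ\vec{X}$ on compact subintervals already licenses both the exponential formula and uniqueness, and, as you correctly flag, the conclusion holds only for as long as the trajectory remains in the open set $U$ where (\ref{eq5}) is assumed.
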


\begin{proof}

Lie derivative of the \textit{inner product }(\ref{eq2}) reads:

\begin{equation}
\label{eq6} L_{\overrightarrow V } \phi \left( \vec {X} \right) =
\dot {\vec {X}} \cdot \left( {\ddot {\vec {X}} \wedge \dddot {\vec
{X}} \wedge \ldots \wedge \mathop {\vec {X}}\limits^{\left( {n + 1}
\right)} } \right) = \left[ {\dot {\vec {X}},\ddot {\vec {X}},\dddot
{\vec {X}},\ldots ,\mathop {\vec {X}}\limits^{\left( {n + 1}
\right)} } \right]
\end{equation}

Moreover, starting from the identity $\ddot {\vec {X}} = J\dot {\vec {X}}$
where $J$ is the functional jacobian matrix associated to any $n$-dimensional
dynamical system (\ref{eq1}) it can be established that:

\[
\mathop {\vec {X}}\limits^{\left( {n + 1} \right)} = J^n\dot
{\vec{X}}\mbox{ \qquad if \qquad} \frac{dJ}{dt} = 0
\]

where $J^n$ represents the $n^{th}$power of $J$.\\
\\
As an example, $\ddot {\vec {X}} = J\dot {\vec {X}} \quad
\Leftrightarrow  \quad \vec {\gamma } = J\vec {V}$. Then, it follows
that

\begin{equation}
\label{eq7} \mathop {\vec {X}}\limits^{\left( {n + 1} \right)} =
JJ^{n - 1}\dot {\vec {X}} = J\mathop {\vec {X}}\limits^{\left( n
\right)}
\end{equation}

Replacing $\mathop {\vec {X}}\limits^{\left( {n + 1} \right)} $ in
expression (\ref{eq6}) by Eq. (\ref{eq7}) we have:

\begin{equation}
\label{eq8} L_{\overrightarrow V } \phi \left( \vec {X} \right) =
\dot {\vec {X}} \cdot \left( {\ddot {\vec {X}} \wedge \dddot {\vec
{X}} \wedge \ldots \wedge J\mathop {\vec {X}}\limits^{\left( n
\right)} } \right) = \left[ {\dot {\vec {X}},\ddot {\vec {X}},\dddot
{\vec {X}},\ldots ,J\mathop {\vec {X}}\limits^{\left( n \right)} }
\right]
\end{equation}

Then, identity (\ref{eq59}) established in appendix leads to:

\[
L_{\overrightarrow V } \phi \left( \vec {X} \right) = Tr\left[ J
\right]\dot {\vec {X}} \cdot \left( {\ddot {\vec {X}} \wedge \dddot
{\vec {X}} \wedge \ldots \wedge \mathop {\vec {X}}\limits^{\left( n
\right)} } \right) = Tr\left[ J \right]\phi \left( \vec {X} \right)
= K\left( \vec {X} \right)\phi \left( \vec {X} \right)
\]

where $K\left( \vec {X} \right) = Tr\left[ J \right]$ represents the trace
of the functional jacobian matrix.

So, according to \textit{Darboux Theorem} invariance of the
\textit{slow manifold} analytical equation of any $n$-dimensional
dynamical system is established provided that the functional
jacobian matrix is stationary (7).

\end{proof}

\textbf{\textit{Note.}} Since the \textit{slow invariant manifold} analytical equation (\ref{eq2}) is defined
starting from the velocity vector field all fixed points are belonging to
it.

\section{Chua's piecewise linear models}

It has been established that Chua's piecewise linear models exhibit
\textit{trajectory curves} in the shape of \textit{double scrolls}
lying on \textit{hyperplanes} the equations of which have been
already analytically computed [Chua \textit{et al.}, 1986, Rossetto
1993, Liu \textit{et al.}, 2007]. The aim of this section is first
to provide these \textit{hyperplanes} equations with a classical
method and then with the new one proposed, i.e., with
\textit{curvature of the flow}. A comparison of \textit{hyperplanes}
equations given by both methods leads to a total identity. Then, it
is stated according to \textit{Darboux Theorem} [1878] that these
\textit{hyperplanes} are overflowing invariant with respect to the
flow of Chua's models and are, consequently, \textit{invariant
manifolds}. Moreover, it is also established, in the framework of
the \textit{Differential Geometry}, that such \textit{hyperplanes}
are no more than ``\textit{osculating hyperplanes''}.

\subsection{Three-dimensional Chua's system}

The piecewise linear Chua's circuit [Chua \textit{et al.}, 1986] is
an electronic circuit comprising an inductance $L_1 $, an active
resistor $R$, two capacitors $C_1 $ and $C_2 $, and a nonlinear
resistor. Chua's circuit can be accurately modeled by means of a
system of three coupled first-order ordinary differential equations
in the variables $x_1 \left( t \right)$, $x_2 \left( t \right)$ and
$x_3 \left( t \right)$, which give the voltages in the capacitors
$C_1 $ and $C_2 $, and the intensity of the electrical current in
the inductance $L_1 $, respectively. These equations called
\textit{global unfolding} of Chua's circuit are written in a
dimensionless form:

\begin{equation}
\label{eq9} \overrightarrow V \left( {{\begin{array}{*{20}c}
 {\frac{dx_1 }{dt}} \hfill \\
 {\frac{dx_2 }{dt}} \hfill \\
 {\frac{dx_3 }{dt}} \hfill \\
\end{array} }} \right) = \overrightarrow \Im \left( {{\begin{array}{*{20}c}
 {f_1 \left( {x_1 ,x_3 ,x_3 } \right)} \hfill \\
 {f_2 \left( {x_1 ,x_3 ,x_3 } \right)} \hfill \\
 {f_3 \left( {x_1 ,x_3 ,x_3 } \right)} \hfill \\
\end{array} }} \right) = \left( {{\begin{array}{*{20}c}
 {\alpha \left( {x_2 - x_1 - k\left( {x_1 } \right)} \right)} \hfill \\
 {x_1 - x_2 + x_3 } \hfill \\
 { - \beta x_2 } \hfill \\
\end{array} }} \right)
\end{equation}

The function $k\left( {x_1 } \right)$ describes the electrical response of
the nonlinear resistor, i.e., its characteristics which is a piecewise
linear function defined by:

\begin{equation}
\label{eq10}
k\left( {x_1 } \right) = \left\{ {{\begin{array}{*{20}c}
 {bx_1 + a - b\mbox{ }x_1 \geqslant 1} \hfill \\
 {ax_1 \mbox{ }\left| {x_1 } \right| \leqslant 1} \hfill \\
 {bx_1 - a + b\mbox{ }x_1 \leqslant - 1} \hfill \\
\end{array} }} \right.
\end{equation}

where the real parameters $\alpha $ and $\beta $ determined by the
particular values of the circuit components are in a standard model
$\alpha = 9$, $\beta = 100/7$, $a = - 8/7$ and $b = - 5/7$ and where
the functions $f_i $ are infinitely differentiable with respect to
all $x_i $, and $t$, i.e., are $C^\infty $ functions in a compact E
included in $\mathbb{R}^3$ and with values in $\mathbb{R}$.

\subsubsection{Tangent linear system approximation}

The piecewise linear Chua's circuit has three fixed points around
which the \textit{double scrolls} wind. Thus, each \textit{scroll}
lies on a \textit{plane} passing through a fixed a point. Its
equation may be calculated while using the \textit{Tangent Linear
System Approximation} [Rossetto, 1993] which consists in using the
\textit{fast }eigenvector associated with the \textit{fast}
eigenvalue of the transposed functional Jacobian matrix in order to
define the normal vector to these \textit{planes}.

The transposed functional jacobian matrix of Chua's system (\ref{eq9}) reads:

\[
{ }^tJ = \left( {{\begin{array}{*{20}c}
 { - \alpha \left( {1 + b} \right)} \hfill & 1 \hfill & 0 \hfill \\
 \alpha \hfill & { - 1} \hfill & { - \beta } \hfill \\
 0 \hfill & 1 \hfill & 0 \hfill \\
\end{array} }} \right)
\]

The \textit{fast }eigenvector associated with the \textit{fast} eigenvalue $\lambda _1 $ may be
written:

\[
{ }^t\overrightarrow {Y_{\lambda _1 } } \left( {{\begin{array}{*{20}c}
 1 \hfill \\
 {\lambda _1 + \alpha \left( {b + 1} \right)} \hfill \\
 {1 + \alpha \frac{b + 1}{\lambda _1 }} \hfill \\
\end{array} }} \right)
\]

Let's denote $\overrightarrow {IM} \left( {x - x_I ,y - y_I ,z - z_I
} \right)$ where $I$ is any fixed point $I_1 $ or $I_2 $ and $M$ any
point belonging to the phase space.\\
It may be checked that:
$\overrightarrow V = J\overrightarrow {IM} $

Thus, according to this method, the $\left( \Pi \right)$ \textit{plane} equation passing
through the fixed point $I_1 $ (resp. $I_2 )$ may be given by the following
orthogonality condition:

\begin{equation}
\label{eq11}
\Pi \left( \vec {X} \right) = \overrightarrow V \cdot { }^t\overrightarrow
{Y_{\lambda _1 } } = 0
\end{equation}

But since $\overrightarrow V = J\overrightarrow {IM} $, Eq. (\ref{eq11}) reads: $\Pi
\left( \vec {X} \right) = J\overrightarrow {IM} \cdot { }^t\overrightarrow
{Y_{\lambda _1 } } = 0$.

Then, according to the \textit{eigenequation}: ${ }^tJ{ }^t\overrightarrow {Y_{\lambda _1 } } =
\lambda _1 { }^t\overrightarrow {Y_{\lambda _1 } } $, it may be checked
that:

\begin{equation}
\label{eq12}
\overrightarrow V \cdot { }^t\overrightarrow {Y_{\lambda _1 } } = \lambda _1
{ }^t\overrightarrow {Y_{\lambda _1 } } \cdot \overrightarrow {IM}
\end{equation}

So, the $\left( \Pi \right)$ \textit{plane} equation passing through the fixed point $I_1
$ (resp. $I_2 )$ is given by:

\begin{equation}
\label{eq13}
\Pi \left( \vec {X} \right) = \lambda _1 \overrightarrow {IM} \cdot {
}^t\overrightarrow {Y_{\lambda _1 } } = 0
\end{equation}

The Lie derivative of $\Pi \left( \vec {X} \right)$ reads, taking into
account Eq. (\ref{eq12}) {\&} Eq. (\ref{eq13}):

\[
L_{\vec {V}} \Pi \left( \vec {X} \right) = \lambda _1
\frac{d\overrightarrow {IM} }{dt} \cdot { }^t\overrightarrow
{Y_{\lambda _1 } } = \lambda _1 \overrightarrow V \cdot {
}^t\overrightarrow {Y_{\lambda _1 } } = \lambda _1 \left( {\lambda
_1 \overrightarrow {IM} \cdot { }^t\overrightarrow {Y_{\lambda _1 }
} } \right) = \lambda _1 \Pi \left( \vec {X} \right)
\]

So, according to \textit{Darboux Theorem }[1878], the \textit{plane} $\Pi \left( \vec {X} \right)$ is invariant.

\subsubsection{Curvature of the flow}

\textit{Curvature of the flow} states that the location of the points where the \textit{second curvature} (\textit{torsion}) of the flow, i.e., the
\textit{second curvature }of the \textit{trajectory curves }integral of Chua's system vanishes directly provides its \textit{slow} \textit{invariant} \textit{manifold}
analytical equation, i.e., the $\left( \Pi \right)$ \textit{planes} equations. According to
Proposition 3.1, Eq. (\ref{eq2}) may be written:

\begin{equation}
\label{eq14}
\phi \left( \vec {X} \right) = \overrightarrow V \cdot \left( {\vec {\gamma
} \wedge \dot {\vec {\gamma }}} \right) = 0
\end{equation}

It can been easily established for any dynamical system that: $\vec {\gamma
} = J\overrightarrow V $. Moreover, since the Chua's system (\ref{eq9}) is
piecewise linear the time derivative of the functional jacobian matrix is
zero: $\frac{dJ}{dt} = 0$. As a consequence, the over-acceleration (or jerk)
reads: $\dot {\vec {\gamma }} = J\vec {\gamma } +
\frac{dJ}{dt}\overrightarrow V = J\vec {\gamma }$.

But since $\overrightarrow V = J\overrightarrow {IM} $, the \textit{slow manifold} equation (\ref{eq14})
may be written:

\begin{equation}
\label{eq15}
\phi \left( \vec {X} \right) = J\overrightarrow {IM} \cdot \left(
{J\overrightarrow V \wedge J\vec {\gamma }} \right) = 0
\end{equation}

The identity (\ref{eq58}) $J\vec {a}.\left( {J\vec {b} \wedge J\vec {c}} \right) =
Det\left( J \right)\vec {a}.\left( {\vec {b} \wedge \vec {c}} \right)$
established in appendix leads to:

\begin{equation}
\label{eq16}
\phi \left( \vec {X} \right) = Det\left( J \right)\overrightarrow {IM} \cdot
\left( {\overrightarrow V \wedge \vec {\gamma }} \right) = 0
\end{equation}

where, $\overrightarrow {IM} \cdot \left( {\overrightarrow V \wedge \vec
{\gamma }} \right) = 0$ is the \textit{osculating plane} passing through the fixed point $I_1 $
(resp. $I_2 )$.

\newpage

The Lie derivative of $\phi \left( \vec {X} \right)$ reads, taking into
account that $\dot {\vec {\gamma }} = J\vec {\gamma }$

\begin{equation}
\label{eq17} L_{\vec {V}} \phi \left( \vec {X} \right) = Det\left( J
\right)\overrightarrow {IM} \cdot \left( {\overrightarrow V \wedge
\dot {\vec {\gamma }}} \right) = Det\left( J \right)\overrightarrow
{IM} \cdot \left( {\overrightarrow V \wedge J\vec {\gamma }} \right)
= 0
\end{equation}

The identity (\ref{eq59}) $J\vec {a}.\left( {\vec {b} \wedge \vec
{c}} \right) + \vec {a}.\left( {J\vec {b} \wedge \vec {c}} \right) +
\vec {a}.\left( {\vec {b} \wedge J\vec {c}} \right) = Tr\left( J
\right)\vec {a}.\left( {\vec {b} \wedge \vec {c}} \right)$
established in appendix leads to:

\[
\overrightarrow {IM} \cdot \left( {\overrightarrow V \wedge J\vec
{\gamma }\,} \right) = Tr\left( J \right)\overrightarrow {IM} \cdot
\left( {\overrightarrow V \wedge \vec {\gamma }} \right) \mbox{\quad
and \quad} L_{\vec {V}} \phi \left( \vec {X} \right) = Tr\left[ J
\right]\phi \left( \vec {X} \right).
\]

So, according to \textit{Darboux Theorem }[1878], the manifold $\phi \left( \vec {X} \right)$ is
invariant.

Moreover, while multiplying Eq. (\ref{eq11}) by its ``conjugated'' equations, i.e.,
by $\overrightarrow V \cdot { }^t\overrightarrow {Y_{\lambda _2 } } $ and
$\overrightarrow V \cdot { }^t\overrightarrow {Y_{\lambda _3 } } $ we have:

\begin{equation}
\label{eq18}
\left( {\overrightarrow V \cdot { }^t\overrightarrow {Y_{\lambda _1 } } }
\right)\left( {\overrightarrow V \cdot { }^t\overrightarrow {Y_{\lambda _2 }
} } \right)\left( {\overrightarrow V \cdot { }^t\overrightarrow {Y_{\lambda
_3 } } } \right) = 0
\end{equation}

But, it has been established [Ginoux \textit{et al.}, 2006] that Eq. (\ref{eq18}) is totally
identical to Eq. (\ref{eq14}). So, taking into account Eq. (\ref{eq12}) {\&} (\ref{eq13}), it may be
written:

\[
\Pi \left( \vec {X} \right)\left( {\overrightarrow V \cdot {
}^t\overrightarrow {Y_{\lambda _2 } } } \right)\left( {\overrightarrow V
\cdot { }^t\overrightarrow {Y_{\lambda _3 } } } \right) = \overrightarrow V
\cdot \left( {\vec {\gamma } \wedge \dot {\vec {\gamma }}} \right) = 0
\]

Then, it proves that the $\left( \Pi \right)$ \textit{plane} equation (\ref{eq13}) is in factor in
Eq. (\ref{eq14}) and so that both methods provide the same\textit{ planes} equations. Moreover, in
the framework of \textit{Differential Geometry}, the $\left( \Pi \right)$ \textit{plane} may be interpreted as the
\textit{osculating plane} passing through each fixed point $I_1 $ (resp. $I_2 )$.

\begin{figure}[htbp]
\centerline{\includegraphics{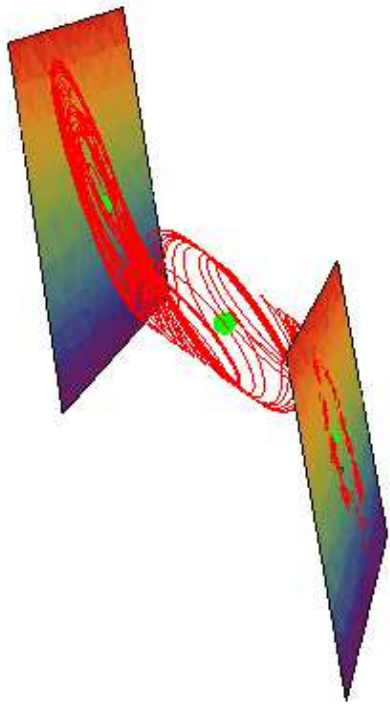}}
\caption{Chua's chaotic invariant hyperplanes for: $\alpha = 9$, $\beta = 100/7$, $a =  -
8/7$, $b = - 5/7.$}
\label{fig1}
\end{figure}

With this set of parameters: $\lambda _1 = - 3.9421$ ; ${
}^t\overrightarrow {Y_{\lambda _1 } } \left( {2.8759, -
3.9421,\mbox{1}} \right)$

$\left( {\Pi _{1,2} } \right)$ \textit{hyperplanes} equations passing through the fixed point
$I_{1,2} \left( { \mp 3 \mathord{\left/ {\vphantom {3 2}} \right.
\kern-\nulldelimiterspace} 2,0,{\pm 3} \mathord{\left/ {\vphantom {{\pm 3}
2}} \right. \kern-\nulldelimiterspace} 2} \right)$ given by both methods
read:

\[
\Pi _{1,2} \left( \vec {X} \right) = 2.8759x_1 - 3.9421x_2 + x_3 \pm 2.8139
= 0
\]

and are plotted in Fig. 1.

\newpage

\subsection{Four-dimensional Chua's system}

The piecewise linear fourth-order Chua's circuit [Thamilmaran
\textit{et al.}, 2004] is an electronic circuit comprising two
inductances $L_1 $ and $L_2 $, two linear resistors $R$ and $R_1 $,
two capacitors $C_1 $ and $C_2 $, and a nonlinear resistor.
Fourth-order Chua's circuit can be accurately modeled by means of a
system of four coupled first-order ordinary differential equations
in the variables $x_1 \left( t \right)$, $x_2 \left( t \right)$,
$x_3 \left( t \right)$ and $x_4 \left( t \right)$, which give the
voltages in the capacitors $C_1 $ and $C_2 $, and the intensities of
the electrical current in the inductance $L_1 $ and $L_2 $,
respectively. These equations called \textit{global unfolding} of
Chua's circuit are written in a dimensionless form:

\begin{equation}
\label{eq19} \overrightarrow V \left( {{\begin{array}{*{20}c}
 {\frac{dx_1 }{dt}} \hfill \\
 {\frac{dx_2 }{dt}} \hfill \\
 {\frac{dx_3 }{dt}} \hfill \\
 {\frac{dx_4 }{dt}} \hfill \\
\end{array} }} \right) = \overrightarrow \Im \left( {{\begin{array}{*{20}c}
 {f_1 \left( {x_1 ,x_3 ,x_3 ,x_4 } \right)} \hfill \\
 {f_2 \left( {x_1 ,x_3 ,x_3 ,x_4 } \right)} \hfill \\
 {f_3 \left( {x_1 ,x_3 ,x_3 ,x_4 } \right)} \hfill \\
 {f_4 \left( {x_1 ,x_3 ,x_3 ,x_4 } \right)} \hfill \\
\end{array} }} \right) = \left( {{\begin{array}{*{20}c}
 {\alpha _1 \left( {x_3 - k\left( {x_1 } \right)} \right)} \hfill \\
 {\alpha _2 x_2 - x_3 - x_4 } \hfill \\
 {\beta _1 \left( {x_2 - x_1 - x_3 } \right)} \hfill \\
 {\beta _2 x_2 } \hfill \\
\end{array} }} \right)
\end{equation}

The function $k\left( {x_1 } \right)$ describes the electrical response of
the nonlinear resistor, i.e., its characteristics which is a piecewise
linear function defined by:

\begin{equation}
\label{eq20}
k\left( {x_1 } \right) = \left\{ {{\begin{array}{*{20}c}
 {bx_1 + a - b\mbox{ }x_1 \geqslant 1} \hfill \\
 {ax_1 \mbox{ }\left| {x_1 } \right| \leqslant 1} \hfill \\
 {bx_1 - a + b\mbox{ }x_1 \leqslant - 1} \hfill \\
\end{array} }} \right.
\end{equation}

where the real parameters $\alpha _i $ and $\beta _i $ determined by
the particular values of the circuit components are in a standard
model $\alpha _1 = 2.1429$, $\alpha _2 = - 0.18$, $\beta _1 =
0.0774$, $\beta _2 = 0.003$ $a = - 0.42$, $b = 1.2$ and where the
functions $f_i $ are infinitely differentiable with respect to all
$x_i $, and $t$, i.e., are $C^\infty $ functions in a compact E
included in $\mathbb{R}^4$ and with values in $\mathbb{R}$.

\subsubsection{Tangent linear system approximation}

The fourth-order piecewise linear Chua's circuit has three fixed points
around which the \textit{double scroll} winds in a hyperspace of dimension four. In a reduced
phase space of dimension three, each \textit{scroll} lies on a \textit{hyperplane} passing through a fixed a
point the equation of which may be calculated while using the \textit{Generalized} \textit{Tangent Linear System Approximation} presented in
appendix. So, according to this method, the $\left( \Pi \right)$
\textit{hyperplane} equation passing through the fixed point $I_1 $ (resp. $I_2 )$ is given by
the following orthogonality condition:

\begin{equation}
\label{eq21}
\Pi \left( \vec {X} \right) = \overrightarrow V \cdot { }^t\overrightarrow
{Y_{\lambda _1 } } = 0
\end{equation}

The piecewise linear feature enables to extend the results of the previous
Sec. 3.1. to higher dimensions. So, the $\left( \Pi \right)$ \textit{hyperplanes} equations
passing through the fixed point $I_1 $ (resp. $I_2 )$ is given by:

\begin{equation}
\label{eq22}
\Pi \left( \vec {X} \right) = \lambda _1 \overrightarrow {IM} \cdot {
}^t\overrightarrow {Y_{\lambda _1 } } = 0
\end{equation}

The Lie derivative of $\Pi \left( \vec {X} \right)$ reads:

\[
L_{\vec {V}} \Pi \left( \vec {X} \right) = \lambda _1
\frac{d\overrightarrow {IM} }{dt} \cdot { }^t\overrightarrow
{Y_{\lambda _1 } } = \lambda _1 \overrightarrow V \cdot {
}^t\overrightarrow {Y_{\lambda _1 } } = \lambda _1 \left( {\lambda
_1 \overrightarrow {IM} \cdot { }^t\overrightarrow {Y_{\lambda _1 }
} } \right) = \lambda _1 \Pi \left( \vec {X} \right)
\]

So, according to \textit{Darboux Theorem }[1878], the \textit{hyperplane} $\Pi \left( \vec {X} \right)$ is invariant.

\subsubsection{Curvature of the flow}

\textit{Curvature of the flow} states that the location of the points where the \textit{third curvature} of the flow, i.e., the
\textit{third curvature }of the \textit{trajectory curves }integral of Chua's fourth-order system vanishes directly provides its
\textit{slow} \textit{invariant} \textit{manifold} analytical equation, i.e., the $\left( \Pi \right)$ \textit{hyperplanes} equations. According
to Proposition 3.1, Eq. (\ref{eq2}) may be written:

\begin{equation}
\label{eq23}
\phi \left( \vec {X} \right) = \overrightarrow V \cdot \left( {\vec {\gamma
} \wedge \dot {\vec {\gamma }} \wedge \ddot {\vec {\gamma }}} \right) = 0
\end{equation}

The piecewise linear feature enables to state that: $\mathop {\vec
{\gamma}}\limits^{\left( n \right)} = J^{\left( {n + 1}
\right)}\overrightarrow V = J^{\left( n \right)}\vec {\gamma }$. So,
according to the fact that as previously: $\overrightarrow V =
J\overrightarrow {IM} $, the \textit{slow manifold} equation
(\ref{eq23}) reads:

\begin{equation}
\label{eq24}
\phi \left( \vec {X} \right) = J\overrightarrow {IM} \cdot \left(
{J\overrightarrow V \wedge J\vec {\gamma } \wedge J\dot {\vec {\gamma }}}
\right) = 0
\end{equation}

Identity (\ref{eq58}) $J\vec {a}.\left( {J\vec {b} \wedge J\vec {c} \wedge J\vec
{d}} \right) = Det\left( J \right)\vec {a}.\left( {\vec {b} \wedge \vec {c}
\wedge \vec {d}} \right)$ established in appendix leads to:

\begin{equation}
\label{eq25}
\phi \left( \vec {X} \right) = Det\left( J \right)\overrightarrow {IM} \cdot
\left( {\overrightarrow V \wedge \vec {\gamma } \wedge \dot {\vec {\gamma
}}} \right) = 0
\end{equation}

where, $\overrightarrow {IM} \cdot \left( {\overrightarrow V \wedge \vec
{\gamma } \wedge \dot {\vec {\gamma }}} \right) = 0$ is the \textit{osculating plane} passing through
the fixed point $I_1 $ (resp. $I_2 )$. The Lie derivative of $\phi \left(
\vec {X} \right)$ reads, taking into account that $\dot {\vec {\gamma }} =
J\vec {\gamma }$ and $\ddot {\vec {\gamma }} = J\dot {\vec {\gamma }}$

\begin{equation}
\label{eq26} L_{\vec {V}} \phi \left( \vec {X} \right) = Det\left( J
\right)\overrightarrow {IM} \cdot \left( {\overrightarrow V \wedge
\vec {\gamma } \wedge \ddot {\vec {\gamma }}} \right) = Det\left( J
\right)\overrightarrow {IM} \cdot \left( {\overrightarrow V \wedge
\vec {\gamma } \wedge J\dot {\vec {\gamma }}} \right) = 0
\end{equation}

The identity (\ref{eq59}) established in appendix:

\[
J\vec {a}.\left( {\vec {b} \wedge \vec {c} \wedge \vec {d}} \right) + \vec
{a}.\left( {J\vec {b} \wedge \vec {c} \wedge \vec {d}} \right) + \vec
{a}.\left( {\vec {b} \wedge J\vec {c} \wedge \vec {d}} \right) + \vec
{a}.\left( {\vec {b} \wedge \vec {c} \wedge J\vec {d}} \right) = Tr\left( J
\right)\vec {a}.\left( {\vec {b} \wedge \vec {c} \wedge \vec {d}} \right)
\]

leads to: $\overrightarrow {IM} \cdot \left( {\overrightarrow V
\wedge \vec {\gamma }\, \wedge J\dot {\vec {\gamma }}} \right) =
Tr\left( J \right)\overrightarrow {IM} \cdot \left( {\overrightarrow
V \wedge {\vec {\gamma }} \wedge \dot {\vec {\gamma }}} \right)$ and
$L_{\vec {V}} \phi \left( \vec {X} \right) = Tr\left[ J \right]\phi
\left( \vec {X} \right)$.

Moreover, while multiplying $\overrightarrow V \cdot {
}^t\overrightarrow {Y_{\lambda _1 } }$ by its ``conjugated''
equations, i.e., by $\overrightarrow V \cdot { }^t\overrightarrow
{Y_{\lambda _2 } } $, $\overrightarrow V \cdot { }^t\overrightarrow
{Y_{\lambda _3 } } $ and $\overrightarrow V \cdot {
}^t\overrightarrow {Y_{\lambda _4 } } $ we have:

\begin{equation}
\label{eq27}
\left( {\overrightarrow V \cdot { }^t\overrightarrow {Y_{\lambda _1 } } }
\right)\left( {\overrightarrow V \cdot { }^t\overrightarrow {Y_{\lambda _2 }
} } \right)\left( {\overrightarrow V \cdot { }^t\overrightarrow {Y_{\lambda
_3 } } } \right)\left( {\overrightarrow V \cdot { }^t\overrightarrow
{Y_{\lambda _4 } } } \right) = 0
\end{equation}

It may also be established that Eq. (\ref{eq27}) is totally identical to Eq. (\ref{eq23})
and so that

\[
\Pi \left( \vec {X} \right)\left( {\overrightarrow V \cdot {
}^t\overrightarrow {Y_{\lambda _2 } } } \right)\left( {\overrightarrow V
\cdot { }^t\overrightarrow {Y_{\lambda _3 } } } \right)\left(
{\overrightarrow V \cdot { }^t\overrightarrow {Y_{\lambda _4 } } } \right) =
\overrightarrow V \cdot \left( {\vec {\gamma } \wedge \dot {\vec {\gamma }}
\wedge \ddot {\vec {\gamma }}} \right) = 0
\]

Then, it proves that the $\left( \Pi \right)$ \textit{hyperplane} equation (\ref{eq22}) is in factor in
Eq. (\ref{eq23}) and so that both methods provide the same\textit{ hyperplanes} equations. Moreover, in
the framework of \textit{Differential Geometry}, the $\left( \Pi \right)$ \textit{hyperplane} may be interpreted as the
\textit{osculating hyperplane} passing through each fixed point $I_1 $ (resp. $I_2 )$.

\begin{figure}[htbp]
\centerline{\includegraphics{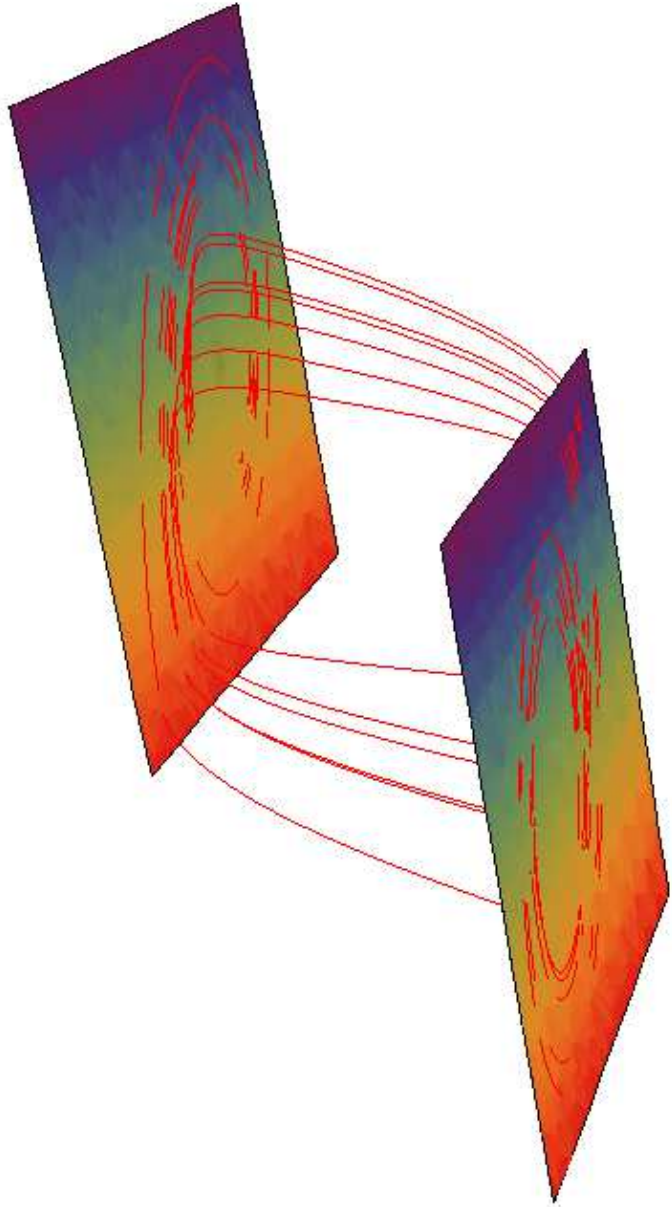}} \caption{Chua's fourth-order
\textit{invariant hyperplanes} in ($x_1x_2x_3$) space for: $\alpha
_1 = 2.1429, \alpha _2 = - 0.18, \beta _1 = 0.0774, \beta _2 =
0.003, a = - 0.42, b = 1.2.$}
\label{fig2}
\end{figure}

With this set of parameters:

\begin{itemize}

\item $\lambda _1 = - 2.5039$ ;

\item ${ }^t\overrightarrow {Y_{\lambda _1 } } \left( { - 0.7532, -
0.01895,0.6574, - 0.007568} \right)$

\end{itemize}

$\left( {\Pi _{1,2} } \right)$ \textit{hyperplanes} equations
passing through fixed point $I_{1,2} \left( { \mp 0.7363,0,\pm
0.7363, \mp 0.7363} \right)$ given by both methods read:

\[
\Pi _{1,2} \left( \vec {X} \right) = 1.8861x_1 + 0.04744x_2 - 1.6461x_3 +
0.01895x_4 \pm 2.6149 = 0
\]

and are plotted in Fig. 2.

\newpage

\subsection{Five-dimensional Chua's system}

The piecewise linear fifth-order Chua's circuit [Hao \textit{et
al.}, 2005] is built while adding a RLC parallel circuit into the
L-arm of Chua's circuit. This electronic circuit consists of two
inductances $L_1 $ and $L_2 $, two linear resistors $R$ and $R_1 $,
three capacitors $C_1 $, $C_2 $ and $C_3 $, and a nonlinear
resistor. Fifth-order Chua's circuit can be accurately modeled by
means of a system of five coupled first-order ordinary differential
equations in the variables $x_1 \left( t \right)$, $x_2 \left( t
\right)$, $x_3 \left( t \right)$, $x_4 \left( t \right)$ and $x_5
\left( t \right)$, which give the voltages in the capacitors $C_1 $,
$C_2 $ and $C_3 $, and the intensities of the electrical current in
the inductance $L_1 $ and $L_2 $, respectively. These equations
called \textit{global unfolding} of Chua's circuit are written in a
dimensionless form:

\begin{equation}
\label{eq28} \overrightarrow V \left( {{\begin{array}{*{20}c}
 {\frac{dx_1 }{dt}} \hfill \\
 {\frac{dx_2 }{dt}} \hfill \\
 {\frac{dx_3 }{dt}} \hfill \\
 {\begin{array}{l}
 \frac{dx_4 }{dt} \\
 \frac{dx_5 }{dt} \\
 \end{array}} \hfill \\
\end{array} }} \right) = \overrightarrow \Im \left( {{\begin{array}{*{20}c}
 {f_1 \left( {x_1 ,x_3 ,x_3 ,x_4 ,x_5 } \right)} \hfill \\
 {f_2 \left( {x_1 ,x_3 ,x_3 ,x_4 ,x_5 } \right)} \hfill \\
 {f_3 \left( {x_1 ,x_3 ,x_3 ,x_4 ,x_5 } \right)} \hfill \\
 {\begin{array}{l}
 f_4 \left( {x_1 ,x_3 ,x_3 ,x_4 ,x_5 } \right) \\
 f_5 \left( {x_1 ,x_3 ,x_3 ,x_4 ,x_5 } \right) \\
 \end{array}} \hfill \\
\end{array} }} \right) = \left( {{\begin{array}{*{20}c}
 {\alpha _1 \left( {x_2 - x_1 - k\left( {x_1 } \right)} \right)} \hfill \\
 {\alpha _2 x_1 - x_2 + x_3 } \hfill \\
 {\beta _1 \left( {x_4 - x_2 } \right)} \hfill \\
 {\begin{array}{l}
 \beta _2 \left( {x_3 + x_5 } \right) \\
 \gamma _2 \left( {x_4 + \gamma _1 x_5 } \right) \\
 \end{array}} \hfill \\
\end{array} }} \right)
\end{equation}

The function $k\left( {x_1 } \right)$ describes the electrical response of
the nonlinear resistor, i.e., its characteristics which is a piecewise
linear function defined by:

\begin{equation}
\label{eq29}
k\left( {x_1 } \right) = \left\{ {{\begin{array}{*{20}c}
 {bx_1 + a - b\mbox{ }x_1 \geqslant 1} \hfill \\
 {ax_1 \mbox{ }\left| {x_1 } \right| \leqslant 1} \hfill \\
 {bx_1 - a + b\mbox{ }x_1 \leqslant - 1} \hfill \\
\end{array} }} \right.
\end{equation}

where the real parameters $\alpha _i $, $\beta _i $ and $\gamma _i $
determined by the particular values of the circuit components are: $\alpha
_1 = 9.934$, $\alpha _2 = 1$, $\beta _1 = 14.47$, $\beta _2 = - 406.5$,
$\gamma _1 = - 0.0152$, $\gamma _2 = 41000$, $a = - 1.246$, $b = - 0.6724$
and where the functions $f_i $ are infinitely differentiable with respect to
all $x_i $, and $t$, i.e., are $C^\infty $ functions in a compact E included in
$\mathbb{R}^5$ and with values in $\mathbb{R}$.

\subsubsection{Tangent linear system approximation}

The fifth-order piecewise linear Chua's circuit has three fixed points
around which the \textit{double scroll} winds in a hyper space of dimension five. In a reduced
phase space of dimension three, each \textit{scroll} lies on a $\left( \Pi \right)$
\textit{hyperplane} equation passing through the fixed point $I_1 $ (resp. $I_2 )$ the equation
of which may be calculated still using the \textit{Generalized} \textit{Tangent Linear System Approximation} presented in appendix. So, the
following orthogonality condition leads to:

\begin{equation}
\label{eq30}
\Pi \left( \vec {X} \right) = \overrightarrow V \cdot { }^t\overrightarrow
{Y_{\lambda _1 } } = 0
\end{equation}

The piecewise linear feature still enables to extend the results of the
previous Sec. 3.1. to higher dimensions. So, the $\left( \Pi \right)$
\textit{hyperplanes} passing through the fixed point $I_1 $ (resp. $I_2 )$ are invariant
according to \textit{Darboux Theorem }[1878].

\subsubsection{Curvature of the flow}

\textit{Curvature of the flow} states that the location of the points where the \textit{fourth curvature} of the flow, i.e., the
\textit{fourth curvature }of the \textit{trajectory curves }integral of Chua's fifth-order system vanishes directly provides its
\textit{slow} \textit{invariant} \textit{manifold} analytical equation, i.e., the $\left( \Pi \right)$ \textit{hyperplanes} equations. According
to Proposition 3.1, Eq. (\ref{eq2}) may be written:

\begin{equation}
\label{eq31}
\phi \left( \vec {X} \right) = \overrightarrow V \cdot \left( {\vec {\gamma
} \wedge \dot {\vec {\gamma }} \wedge \ddot {\vec {\gamma }} \wedge \dddot
{\vec {\gamma }}} \right) = 0
\end{equation}

The piecewise linear feature and both identity (\ref{eq58}) and (\ref{eq59}) enable to
state, according to \textit{Darboux Theorem }[1878], that the manifold $\phi \left( \vec {X} \right)$
is invariant.

Morover, while multiplying $\overrightarrow V \cdot { }^t\overrightarrow
{Y_{\lambda _1 } } $ by its ``conjugated'' equations, i.e., by
$\overrightarrow V \cdot { }^t\overrightarrow {Y_{\lambda _2 } } $,
$\overrightarrow V \cdot { }^t\overrightarrow {Y_{\lambda _3 } } $,
$\overrightarrow V \cdot { }^t\overrightarrow {Y_{\lambda _4 } } $ and
$\overrightarrow V \cdot { }^t\overrightarrow {Y_{\lambda _5 } } $ we have:

\begin{equation}
\label{eq32}
\left( {\overrightarrow V \cdot { }^t\overrightarrow {Y_{\lambda _1 } } }
\right)\left( {\overrightarrow V \cdot { }^t\overrightarrow {Y_{\lambda _2 }
} } \right)\left( {\overrightarrow V \cdot { }^t\overrightarrow {Y_{\lambda
_3 } } } \right)\left( {\overrightarrow V \cdot { }^t\overrightarrow
{Y_{\lambda _4 } } } \right)\left( {\overrightarrow V \cdot {
}^t\overrightarrow {Y_{\lambda _5 } } } \right) = 0
\end{equation}

It may also be established that Eq. (\ref{eq32}) is totally identical to Eq. (\ref{eq31})
and so that

\[
\Pi \left( \vec {X} \right)\left( {\overrightarrow V \cdot {
}^t\overrightarrow {Y_{\lambda _2 } } } \right)\left( {\overrightarrow V
\cdot { }^t\overrightarrow {Y_{\lambda _3 } } } \right)\left(
{\overrightarrow V \cdot { }^t\overrightarrow {Y_{\lambda _4 } } }
\right)\left( {\overrightarrow V \cdot { }^t\overrightarrow {Y_{\lambda _5 }
} } \right) = \overrightarrow V \cdot \left( {\vec {\gamma } \wedge \dot
{\vec {\gamma }} \wedge \ddot {\vec {\gamma }} \wedge \dddot {\vec {\gamma
}}} \right) = 0
\]

Then, it proves that the $\left( \Pi \right)$ \textit{hyperplane} equation (\ref{eq30}) is in factor in
Eq. (\ref{eq31}) and so that both methods provide the same\textit{ hyperplanes} equations. Moreover, in
the framework of \textit{Differential Geometry}, the $\left( \Pi \right)$ \textit{hyperplane} may still be interpreted as the
\textit{osculating hyperplane} passing through each fixed point $I_1 $ (resp. $I_2 )$.

\vspace{0.1in}

With this set of parameters eigenvalues and eigenvectors are respectively:

\begin{itemize}

\item $ \lambda _1 = - 311.49$

\item ${ }^t\overrightarrow {Y_{\lambda _1 } } \left( {0.5625, -
0.8068,0.1804,0.00009693, - 0.000063709} \right)$

\end{itemize}

$\left( {\Pi _{1,2} } \right)$ \textit{hyperplanes} equations passing through fixed point

\[
I_{1,2} \left( { \mp 1.83477, \mp 0.027471,\pm 1.8073, \mp 0.027471, \mp
1.8073} \right)
\]

given by both methods read:

\[
\Pi _{1,2} \left( \vec {X} \right) = - 2.63746x_1 + 3.78315x_2 - 0.846258x_3
- 0.000454517x_4 + 0.000298719x_5 \mp 3.20524
\]

and are plotted in Fig. 3.

\begin{figure}[htbp] \centerline{\includegraphics{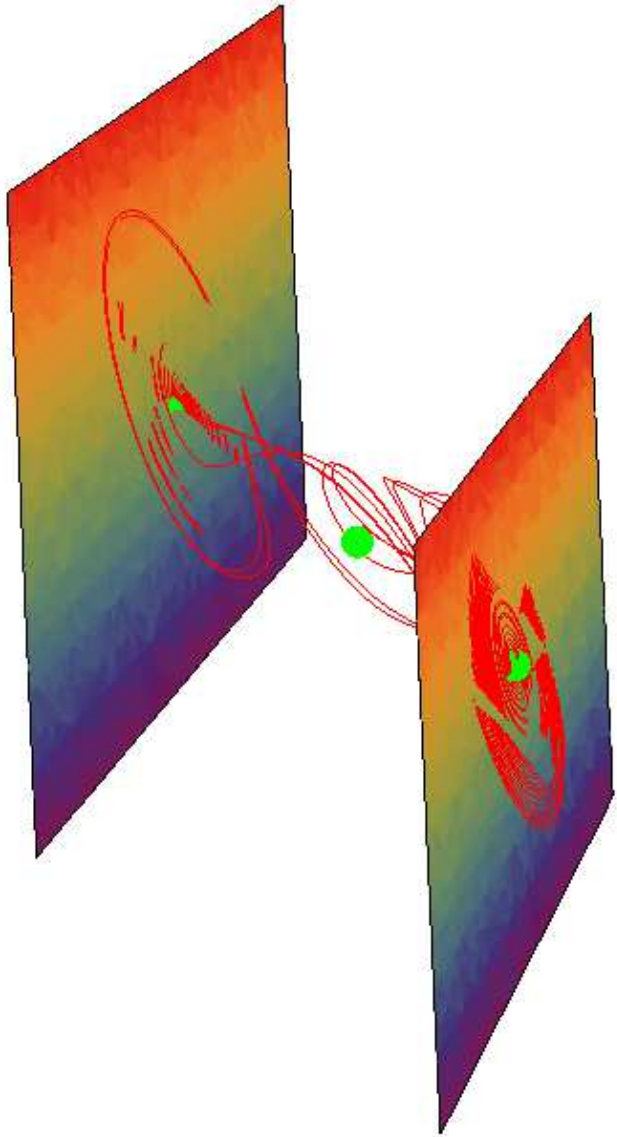}}
\caption{Chua's fifth-order \textit{invariant hyperplanes} in
($x_1x_2x_3$) space for: $\alpha _2 = 1$, $\alpha _1 = 9.934$,
$\beta _1 = 14.47$, $\beta _2 = - 406.5$, $\gamma _1 = - 0.0152$,
$\gamma _2 = 41000$, $a = - 1.246$, $b = - 0.6724.$}
\label{fig3}
\end{figure}

\section{Chua's cubic nonlinear models}

After these tutorial examples concerning Chua's piecewise linear systems,
let's apply the \textit{curvature of the flow} to \textit{nonlinear} Chua's cubic systems of dimension three, four and five.

\subsection{Three-dimensional cubic Chua's system}

The \textit{slow invariant manifold} of the third-order Chua's cubic circuit [Rossetto \textit{et al.}, 1998] has already
been calculated with \textit{curvature of the flow} in [Ginoux \textit{et al.}, 2006].

\subsection{Four-dimensional cubic Chua's system}

The fourth-order cubic Chua's circuit [Thamilmaran \textit{et al.}, 2004, Liu \textit{et al.}, 2007] may
be described starting from the same set of differential equations as (\ref{eq19})
but while replacing the piecewise linear function by a smooth cubic
nonlinear.

\begin{equation}
\label{eq33} \overrightarrow V \left( {{\begin{array}{*{20}c}
 {\frac{dx_1 }{dt}} \hfill \\
 {\frac{dx_2 }{dt}} \hfill \\
 {\frac{dx_3 }{dt}} \hfill \\
 {\frac{dx_4 }{dt}} \hfill \\
\end{array} }} \right) = \overrightarrow \Im \left( {{\begin{array}{*{20}c}
 {f_1 \left( {x_1 ,x_3 ,x_3 ,x_4 } \right)} \hfill \\
 {f_2 \left( {x_1 ,x_3 ,x_3 ,x_4 } \right)} \hfill \\
 {f_3 \left( {x_1 ,x_3 ,x_3 ,x_4 } \right)} \hfill \\
 {f_4 \left( {x_1 ,x_3 ,x_3 ,x_4 } \right)} \hfill \\
\end{array} }} \right) = \left( {{\begin{array}{*{20}c}
 {\alpha _1 \left( {x_3 - \hat{k}\left( {x_1 } \right)} \right)} \hfill \\
 {\alpha _2 x_2 - x_3 - x_4 } \hfill \\
 {\beta _1 \left( {x_2 - x_1 - x_3 } \right)} \hfill \\
 {\beta _2 x_2 } \hfill \\
\end{array} }} \right)
\end{equation}

The function $\hat {k}\left( {x_1 } \right)$ describing the electrical
response of the nonlinear resistor is an odd-symmetric function similar to
the piecewise linear nonlinearity $k\left( {x_1 } \right)$ for which the
parameters $c_1 = 0.3937$ and $c_2 = - 0.7235$ are determined while using
least-square method [Tsuneda, 2005] and which characteristics is defined by:

\begin{equation}
\label{eq34}
\hat {k}\left( {x_1 } \right) = c_1 x_1^3 + c_2 x_1
\end{equation}

The real parameters $\alpha _i $ and $\beta _i $ determined by the
particular values of the circuit components are in a standard model $\alpha
_1 = 2.1429$, $\alpha _2 = - 0.18$, $\beta _1 = 0.0774$, $\beta _2 = 0.003$
$c_1 = 0.3937$ and $c_2 = - 0.7235$ and where the functions $f_i $ are
infinitely differentiable with respect to all $x_i $, and $t$, i.e., are
$C^\infty $ functions in a compact E included in $\mathbb{R}^4$ and with
values in $\mathbb{R}$.

\textit{Curvature of the flow} states that the location of the points where the \textit{fourth curvature of the flow}, i.e., the \textit{fourth curvature }of the
\textit{trajectory curves }integral of Chua's cubic system vanishes directly provides its \textit{slow} \textit{invariant} \textit{manifold} analytical
equation. According to Proposition 3.1, Eq. (\ref{eq2}) may be written:

\begin{equation}
\label{eq35}
\phi \left( \vec {X} \right) = \overrightarrow V \cdot \left( {\vec {\gamma
} \wedge \dot {\vec {\gamma }} \wedge \ddot {\vec {\gamma }} \wedge \dddot
{\vec {\gamma }}} \right) = 0
\end{equation}

Then, it may be proved that in the vicinity of the \textit{singular
approximation} defined by $f_1 \left( \vec {X} \right) = 0$ the
functional jacobian matrix is stationary, i.e., its time derivative
vanishes identically and so, Lie derivative $L_{\vec {V}} \phi
\left( \vec {X} \right) = 0$ vanishes identically. Thus, according
to \textit{Darboux Theorem }[1878], the manifold $\phi \left( \vec
{X} \right)$ which is \textit{locally invariant} is plotted in Fig. 4.

\begin{figure}[htbp]
\centerline{\includegraphics{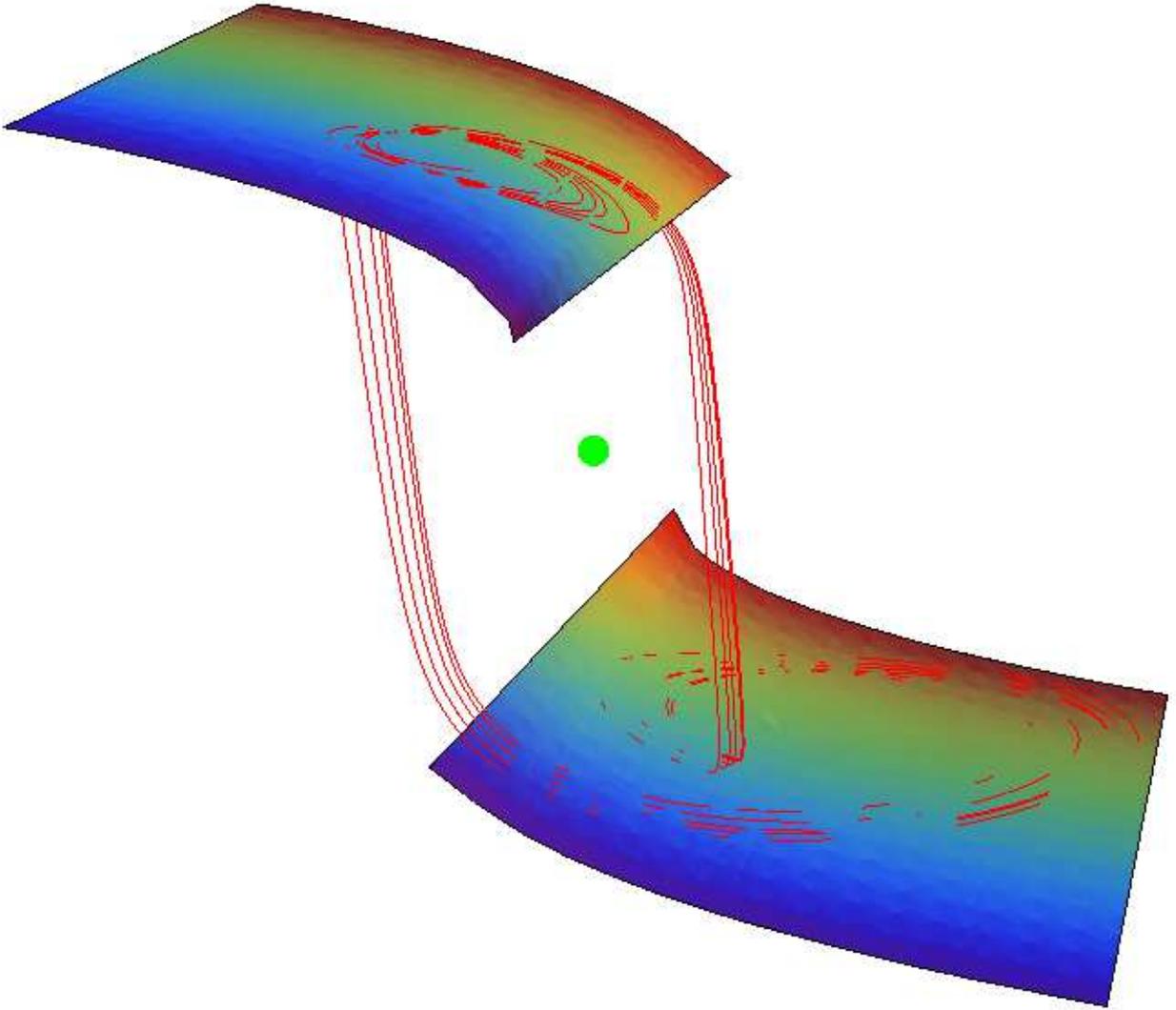}} \caption{Fourth-order Chua's
cubic \textit{invariant manifold} in the ($x_1x_2x_3$) space for:
$\alpha _1 = 2.1429$, $\alpha _2 = - 0.18$, $\beta _1 = 0.0774$,
$\beta _2 = 0.003$, $a = - 0.42$, $b = 1.2.$}
\label{fig4}
\end{figure}

\subsection{Five-dimensional models}

The fifth-order cubic Chua's circuit [Hao \textit{et al.}, 2005] may be described starting
from the same set of differential equations as (\ref{eq28}) but while replacing the
piecewise linear function by a smooth cubic nonlinear.

\begin{equation}
\label{eq36}
\overrightarrow V \left( {{\begin{array}{*{20}c}
 {\frac{dx_1 }{dt}} \hfill \\
 {\frac{dx_2 }{dt}} \hfill \\
 {\frac{dx_3 }{dt}} \hfill \\
 {\begin{array}{l}
 \frac{dx_4 }{dt} \\
 \frac{dx_5 }{dt} \\
 \end{array}} \hfill \\
\end{array} }} \right) = \overrightarrow \Im \left( {{\begin{array}{*{20}c}
 {f_1 \left( {x_1 ,x_3 ,x_3 ,x_4 ,x_5 } \right)} \hfill \\
 {f_2 \left( {x_1 ,x_3 ,x_3 ,x_4 ,x_5 } \right)} \hfill \\
 {f_3 \left( {x_1 ,x_3 ,x_3 ,x_4 ,x_5 } \right)} \hfill \\
 {\begin{array}{l}
 f_4 \left( {x_1 ,x_3 ,x_3 ,x_4 ,x_5 } \right) \\
 f_5 \left( {x_1 ,x_3 ,x_3 ,x_4 ,x_5 } \right) \\
 \end{array}} \hfill \\
\end{array} }} \right) = \left( {{\begin{array}{*{20}c}
 {\alpha _1 \left( {x_2 - x_1 - \hat{k}\left( {x_1 } \right)} \right)} \hfill \\
 {\alpha _2 x_1 - x_2 + x_3 } \hfill \\
 {\beta _1 \left( {x_4 - x_2 } \right)} \hfill \\
 {\begin{array}{l}
 \beta _2 \left( {x_3 + x_5 } \right) \\
 \gamma _2 \left( {x_4 + \gamma _1 x_5 } \right) \\
 \end{array}} \hfill \\
\end{array} }} \right)
\end{equation}

The function $\hat {k}\left( {x_1 } \right)$ describing the electrical
response of the nonlinear resistor is an odd-symmetric function similar to
the piecewise linear nonlinearity $k\left( {x_1 } \right)$ for which the
parameters $c_1 = 0.1068$ and $c_2 = - 0.3056$ are determined while using
least-square method [Tsuneda, 2005] and which characteristics is defined by:

\begin{equation}
\label{eq37}
\hat {k}\left( {x_1 } \right) = c_1 x_1^3 + c_2 x_1
\end{equation}

The real parameters $\alpha _i $, $\beta _i $ and $\gamma _i $
determined by the particular values of the circuit components are:
$\alpha _1 = 9.934$, $\alpha _2 = 1$, $\beta _1 = 14.47$, $\beta _2
= - 406.5$,\\ $\gamma _1 = - 0.0152$, $\gamma _2 = 41000$, $a = -
1.246$, $b = - 0.6724$, $c_1 = 0.1068$, $c_2 = - 0.3056$ and where
the functions $f_i $ are infinitely differentiable with respect to
all $x_i $, and $t$, i.e., are $C^\infty $ functions in a compact E
included in $\mathbb{R}^5$ and with values in $\mathbb{R}$.

\textit{Curvature of the flow} states that the location of the points where the \textit{fourth curvature of the flow}, i.e., the \textit{fourth curvature }of the
\textit{trajectory curves }integral of Chua's fifth-order system vanishes directly provides its \textit{slow}
\textit{invariant} \textit{manifold} analytical equation. According to Proposition 3.1, Eq. (\ref{eq2}) may be written:

\begin{equation}
\label{eq38}
\phi \left( \vec {X} \right) = \overrightarrow V \cdot \left( {\vec {\gamma
} \wedge \dot {\vec {\gamma }} \wedge \ddot {\vec {\gamma }} \wedge \dddot
{\vec {\gamma }} \wedge \ddddot {\vec {\gamma }}} \right) = 0
\end{equation}

Then, it may be proved that in the vicinity of the \textit{singular
approximation} defined by $f_1 \left( \vec {X} \right) = 0$ the
functional jacobian matrix is stationary, i.e., its time derivative
vanishes identically and so, Lie derivative $L_{\vec {V}} \phi
\left( \vec {X} \right) = 0$ vanishes identically. Thus, according
to \textit{Darboux Theorem }[1878], the manifold $\phi \left( \vec
{X} \right)$ which is \textit{locally invariant} is plotted in Fig. 5.

\begin{figure}[htbp] \centerline{\includegraphics{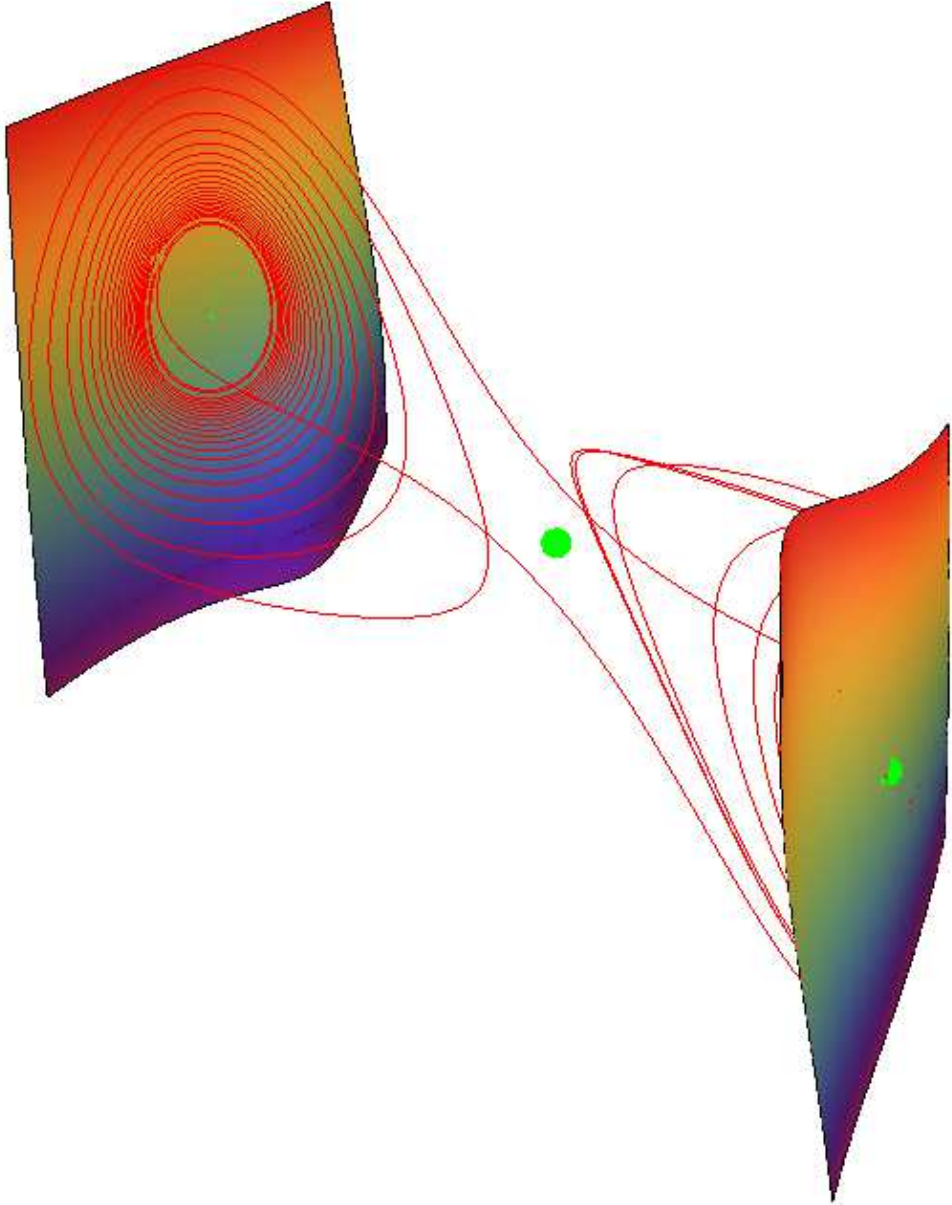}}
\caption{Fifth-order Chua's cubic \textit{invariant manifold} in the
($x_1x_2x_3$) space for: $\alpha _2 = 1$, $\alpha _1 = 9.934$,
$\beta _1 = 14.47$, $\beta _2 = - 406.5$, $\gamma _1 = - 0.0152$,
$\gamma _2 = 41000$, $c_1 = 0.1068$, $c_2 = - 0.3056.$} \label{fig5}
\end{figure}

\section{High-dimensional nonlinear models}

In this section two examples are considered. The former is a
nonlinear fifth-order model of magnetoconvection [Knobloch
\textit{et al.}, 1981] for which the slow invariant manifold will be
directly provided by using \textit{curvature of the flow}. The
latter is an artificial nonlinear fifth-order model [Gear \textit{et
al.}, 2005] having three \textit{attractive invariant manifolds}.

\subsection{Five-dimensional magnetoconvection model}

A fifth-order system for magnetoconvection [Knobloch \textit{et al.}, 1981] is designed to
describe nonlinear coupling between Rayleigh-Bernard convection and an
external magnetic field. This type of system was first presented by Veronis
[Veronis, 1966] in studying a rotating fluid. The fifth-order system of
magnetoconvection is a straightforward extension of the Lorenz model for the
Boussinesq convection interacting with the magnetic field. The fifth-order
autonomous system of magnetoconvection is given as follows:

\begin{equation}
\label{eq39} \overrightarrow V \left( {{\begin{array}{*{20}c}
 {\frac{dx_1 }{dt}} \hfill \\
 {\frac{dx_2 }{dt}} \hfill \\
 {\frac{dx_3 }{dt}} \hfill \\
 {\begin{array}{l}
 \frac{dx_4 }{dt} \\
 \frac{dx_5 }{dt} \\
 \end{array}} \hfill \\
\end{array} }} \right) = \left( {{\begin{array}{*{20}c}
 {\sigma \left[ { - x_1 + rx_2 - qx_4 \left( {1 + \frac{\omega \left( {3 -
\omega } \right)}{\varsigma ^2\left( {4 - \omega } \right)}x_5 } \right)}
\right]} \hfill \\
 { - x_2 + x_1 - x_1 x_3 } \hfill \\
 {\omega \left( { - x_3 + x_1 x_2 } \right)} \hfill \\
 {\begin{array}{l}
 - \varsigma \left( {x_4 - x_1 } \right) - \frac{\omega }{\varsigma \left(
{4 - \omega } \right)}x_1 x_5 \\
 - \varsigma \left( {4 - \omega } \right)\left( {x_5 - x_1 x_4 } \right) \\
 \end{array}} \hfill \\
\end{array} }} \right)
\end{equation}

where $x_1 \left( t \right)$ represents the first-order velocity
perturbation, while$x_2 \left( t \right)$, $x_3 \left( t \right)$, $x_4
\left( t \right)$ and $x_5 \left( t \right)$ are measures of the first- and
the second-order perturbations to the temperature and to the magnetic flux
function, respectively. With the five real parameters where $\varsigma =
0.09683$ is the magnetic \textit{Prandtl} number (the ratio of the magnetic to the thermal
diffusivity), $\sigma = 1$ is the \textit{Prandtl} number, $r = 14.47$ is a normalized
\textit{Rayleigh} number, $q = 5$ is a normalized \textit{Chandrasekhar} number, and $\omega = 0.1081$ is a
geometrical parameter and where the functions $f_i $ are infinitely
differentiable with respect to all $x_i $, and $t$, i.e., are $C^\infty $
functions in a compact E included in $\mathbb{R}^5$ and with values in
$\mathbb{R}$.

\textit{Curvature of the flow} states that the location of the
points where the \textit{fourth curvature of the flow}, i.e., the
\textit{fourth curvature }of the \textit{trajectory curves }integral
of fifth-order magnetoconvection system vanishes directly provides
its \textit{slow} \textit{invariant} \textit{manifold} analytical
equation. According to Proposition 3.1, Eq. (\ref{eq2}) may be
written:

\begin{equation}
\label{eq40}
\phi \left( \vec {X} \right) = \overrightarrow V \cdot \left( {\vec {\gamma
} \wedge \dot {\vec {\gamma }} \wedge \ddot {\vec {\gamma }} \wedge \dddot
{\vec {\gamma }} \wedge \ddddot {\vec {\gamma }}} \right) = 0
\end{equation}

Then, it may be proved that in the vicinity of the \textit{singular
approximation} defined by $f_1 \left( \vec {X} \right) = 0$ the
functional jacobian matrix is stationary, i.e., its time derivative
vanishes identically and so, Lie derivative $L_{\vec {V}} \phi
\left( \vec {X} \right) = 0$ vanishes identically. Thus, according
to \textit{Darboux Theorem }[1878], the manifold $\phi \left( \vec
{X} \right)$ which is \textit{locally invariant} is plotted in Fig. 6.

\begin{figure}[htbp]
\centerline{\includegraphics{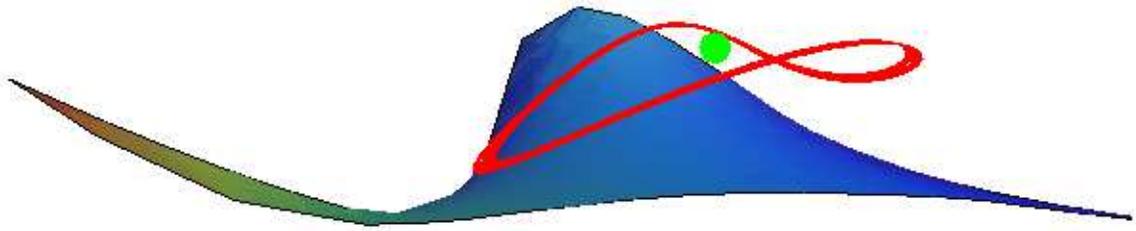}} \caption{Fifth-order
magnetoconvection \textit{invariant manifold} in the ($x_1x_2x_3$)
space for: $\varsigma = 0.09683$, $\sigma = 1$, $r = 14.47$, $q =
5$, $\omega = 0.1081.$} \label{fig6}
\end{figure}

\subsection{Five-dimensional nonlinear model}

Let's consider the following fifth-order nonlinear dynamical system
[Gear \textit{et al.}, 2005]

\begin{equation}
\label{eq41}
\overrightarrow V = \left( {{\begin{array}{*{20}c}
 {\frac{dx_1 }{dt}} \hfill \\
 {\frac{dx_2 }{dt}} \hfill \\
 {\frac{dx_3 }{dt}} \hfill \\
 {\begin{array}{l}
 \frac{dx_4 }{dt} \\
 \frac{dx_5 }{dt} \\
 \end{array}} \hfill \\
\end{array} }} \right) = \left( {{\begin{array}{*{20}c}
 { - x_2 } \hfill \\
 {x_1 } \hfill \\
 {L\left( {x_1^2 + x_2^2 - x_3 } \right)} \hfill \\
 {\begin{array}{l}
 \beta _1 + x_4^2 \\
 \beta _2 + x_2^2 \\
 \end{array}} \hfill \\
\end{array} }} \right)
\end{equation}

where the real parameters values may be arbitrarily chosen as $L = 1000$,
$\beta _1 = 800$, $\beta _2 = 1200$ and where the functions $f_i $ are
infinitely differentiable with respect to all $x_i $, and $t$, i.e., are
$C^\infty $ functions in a compact E included in $\mathbb{R}^5$ and with
values in $\mathbb{R}$.

\textit{Curvature of the flow} states that the location of the
points where the \textit{fourth curvature of the flow}, i.e., the
\textit{fourth curvature }of the \textit{trajectory curves }integral
of fifth-order nonlinear dynamical system vanishes directly provides
its \textit{invariant} \textit{manifolds} analytical equation.
According to Proposition 3.1, Eq. (\ref{eq2}) may be written:

\begin{eqnarray}
\label{eq42} \phi \left( \vec {X} \right) & = & \overrightarrow V
\cdot \left( {\vec {\gamma } \wedge \dot {\vec {\gamma }} \wedge
\ddot {\vec {\gamma }} \wedge \dddot {\vec {\gamma }} \wedge \ddddot
{\vec {\gamma }}} \right)\nonumber \\ & = & \left( {x_1^2 + x_2^2 }
\right)\left( {x_1^2 + x_2^2 - x_3 } \right)\left( {x_4^2 + \beta _1
} \right)Q\left( \vec {X} \right) = 0
\end{eqnarray}

where $Q\left( \vec {X} \right)$ is an irreducible polynomial and
while posing:\\

\[
\varphi \left( \vec {X} \right) = \left( {x_1^2 + x_2^2 }
\right)\left( {x_1^2 + x_2^2 - x_3 } \right)\left( {x_4^2 + \beta _1
} \right)
\]

it may be established that:

\begin{eqnarray}
\label{eq43} L_{\vec {V}} \varphi \left( \vec {X} \right) & = & -
\left( {L - 2x_4 } \right)\left( {x_1^2 + x_2^2 } \right)\left(
{x_1^2 + x_2^2 - x_3 } \right)\left( {x_4^2 + \beta _1 }
\right)\nonumber \\ & = & K\left( \vec {X} \right)\varphi \left(
\vec {X} \right)
\end{eqnarray}

Thus, according to \textit{Darboux Theorem }[1878], the five-dimensional model has three invariant
manifolds, namely $\varphi \left( \vec {X} \right)$ is \textit{invariant}. Moreover, it may be
proved that $\left( {x_1^2 + x_2^2 } \right)$ is first integral. So,
\textit{curvature of the flow} may also be used to ``detect'' first integral of dynamical systems.

\section{Slow invariant manifolds gallery}

In this section two examples of \textit{slow invariant manifolds} of
chaotic attractors are presented. The first (Fig. 7a.) is a chemical kinetics
model used by Gaspard and Nicolis (Journal of Statistics Physics,
Vol. 32, N° 3, 1983, 499 - 518). The second (Fig. 7b) is a neuronal bursting
model elaborated by Hindmarsh-Rose (Philos. Trans. Roy. Soc. London
Ser. B 221, 1984, 87-102).

\begin{figure}[htbp]
\centerline{\includegraphics[width=4.17in,height=4.17in]{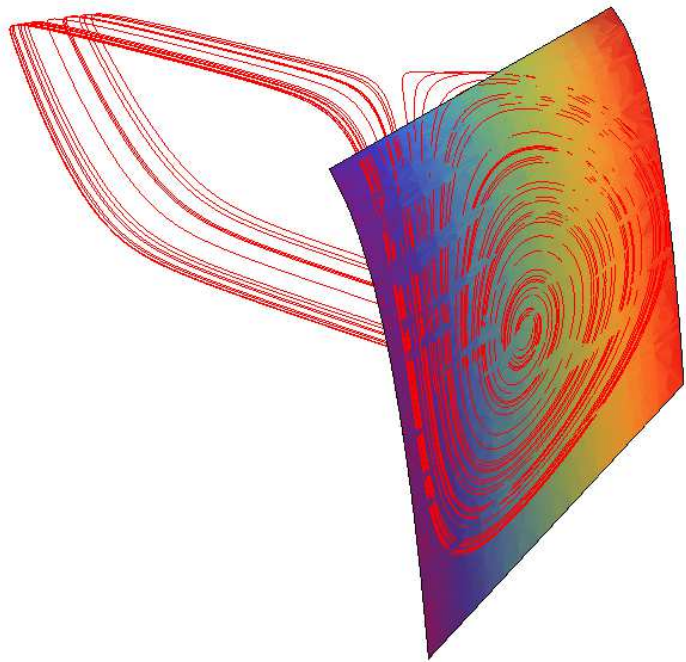}} \caption{Chemical kinetics model}
\label{fig7a}
\end{figure}

\begin{figure}[htbp]
\centerline{\includegraphics[width=4.17in,height=4.17in]{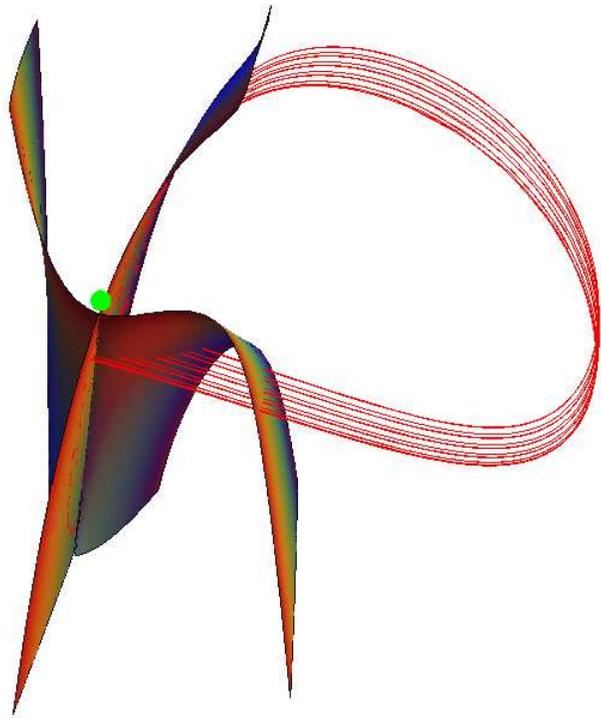}} \caption{Neuronal bursting model}
\label{fig7a}
\end{figure}

A gallery of \textit{slow invariant manifolds} is accessible at:
http://ginoux.univ-tln.fr

\newpage

\section{Discussion}

During the twentieth century various methods have been developed in order to
determine the \textit{slow invariant} \textit{manifold} analytical equation associated to \textit{slow-fast dynamical systems }or \textit{singularly perturbed systems} among which the
so-called \textit{Geometric Singular Perturbation Theory} [Fenichel 1979] and the \textit{Tangent Linear System Approximation} [Rossetto \textit{et al.} 1998]. As pointed out by
O'Malley [1974 p. 78, 1991 p. 21] the problem for finding the \textit{slow invariant manifold} analytical
equation with the \textit{Geometric Singular Perturbation Theory} turned into a regular perturbation problem in which one
generally expected the asymptotic validity of such expansion to breakdown.
Moreover, for high-dimensional \textit{singularly perturbed systems} \textit{slow invariant manifold} analytical equation determination lead to
tedious calculations. The \textit{Tangent Linear System Approximation} the generalization of which is presented in
appendix, provided the \textit{slow manifold} analytical equation of $n$-dimensional dynamical
systems according to the ``slow'' eigenvectors of the \textit{tangent linear system}, i.e., according to
the ``slow'' eigenvalues. Nevertheless, the presence of these eigenvalues
(real or complex conjugated) prevented from expressing this equation
explicitly. Moreover, starting from dimension five \textit{Galois Theory} precludes from
analytically computing eigenvalues associated with the functional jacobian
matrix of a five-dimensional dynamical system.

In this work, while considering \textit{trajectory curves}, integral
of $n$-dimensional dynamical systems, within the framework of
\textit{Differential Geometry} as curves in Euclidean $n$-space it
has be established that the \textit{curvature of the flow}, i.e.,
the \textit{curvature} of the \textit{trajectory curves} of any
$n$-dimensional dynamical system directly provides its \textit{slow
manifold} analytical equation the invariance of which has been
proved according to \textit{Darboux theorem}. Thus, it has been
stated that since \textit{curvature} only involves time derivatives
of the velocity vector field and uses neither eigenvectors nor
asymptotic expansions this simplifying method improves the
\textit{slow invariant manifold }analytical equation determination
of high-dimensional dynamical systems. Chua's paradigmatic models
and nonlinear magnetoconvection high-dimensional dynamical system
have exemplified this result. Since it has been shown in the
appendix that \textit{curvature of the flow} generalizes the
\textit{Tangent Linear System Approximation} and encompasses the
so-called \textit{Geometric Singular Perturbation Theory}, it may be
applied for \textit{slow invariant manifolds} determination of
various kinds of high-dimensional dynamical system such as
\textit{Chemical kinetics}, \textit{Neuronal Bursting models},
\textit{L.A.S.E.R. models}\ldots

Two of the main perspectives to be given at this work may be
highlighted. The former is \textit{bifurcations}. It seems
reasonable to consider that a \textit{bifurcation} would modify the
shape of the manifold and so conversely, geometric interpretations
could enable to highlight such \textit{bifurcations}. And the latter
deals with the particular feature highlighted in Sec. 5.2, i.e.,
that \textit{curvature of the flow} enables ``detecting''
\textit{first integral} of dynamical systems. These works in
progress will be developed in another publication.

\section*{References}

\hspace{0.2in} Andronov, A. A., Khaikin, S. E. {\&} Vitt, A. A.
[1937] \textit{Theory of oscillators}, I, Moscow (Engl. transl.,
Princeton
Univ. Press, Princeton, N. J., 1949).\\

Christopher, C., Llibre, J. {\&} Pereira, J.V. [2007] ``Multiplicity
of invariant algebraic curves in polynomial vector fields,''
\textit{Pac. J. Math.} 229, 63--117.\\

Chua, L. O., Komuro, M. {\&} Matsumoto, T. [1986] ``The Double
Scroll Family,''\textit{ IEEE Trans. Circuits Syst.}, CAS-33
(\ref{eq10}), 1072-1118.\\

Coddington, E.A. {\&} Levinson, N. [1955] \textit{Theory of Ordinary
Differential Equations}, Mac Graw Hill, New York.\\

Cole, J.D. [1968] ``Perturbation Methods in Applied Mathematics,''
Blaisdell, Waltham, MA.\\

Darboux, G. [1878] ``Sur les \'{e}quations diff\'{e}rentielles
alg\'{e}briques du premier ordre et du premier degr\'{e},''
\textit{Bull. Sci. Math. Sr.} 2(\ref{eq2}), 60-96, 123-143,
151-200.\\

Fenichel, N. [1971] ``Persistence and Smoothness of Invariant
Manifolds for Flows,'' \textit{Ind. Univ. Math. J.} 21, 193-225.\\

Fenichel, N. [1974] ``Asymptotic stability with rate conditions,''
\textit{Ind. Univ. Math. J.} 23, 1109-1137.\\

Fenichel, N. [1977] ``Asymptotic stability with rate conditions
II,'' \textit{Ind. Univ. Math. J.} 26, 81-93.\\

Fenichel, N. [1979] ``Geometric singular perturbation theory for
ordinary differential equations,'' \textit{J. Diff. Eq.} 31,
53-98.\\

Frenet, F. [1852] ``Sur les courbes \`{a} double
courbure,''~Th\`{e}se Toulouse, 1847. R\'{e}sum\'{e} dans \textit{J.
de Math.}, 17.\\

Gear, C.W., Kaper, T.J., Kevrekidis, I. {\&} Zagaris, A. [2004]
``Projecting to a slow manifold: singularly perturbed systems and
legacy codes,'' \textit{SIAM Journal on Applied Dynamical Systems},
preprint.\\

Ginoux, J.M. {\&} Rossetto, B. [2006] ``Differential Geometry and
Mechanics Applications to Chaotic Dynamical Systems,'' \textit{Int.
J. Bifurcation and Chaos} 4, Vol. 16, 887-910.\\

Gluck, H. [1966] ``Higher Curvatures of Curves in Euclidean Space,''
\textit{The American Mathematical Monthly}, Vol. 73, No. 7,
699-704.\\

Hao, L., Liu, J. {\&} Wang, R. [2005] ``Analysis of a Fifth-Order
Hyperchaotic Circuit,'' in \textit{IEEE Int. Workshop VLSI Design
{\&} Video Tech.}, 268-271.\\

Knobloch, E. {\&} Proctor, M. [1981] ``Nonlinear periodic convection in
double-diffusive systems,'' \textit{J. Fluid Mech.} 108, 291-316.\\

Levinson, N. [1949] ``A second order differential equation with singular
solutions,'' \textit{Ann. Math.} 50, 127-153.\\

Lichnerowicz, A. [1950] \textit{\'{E}l\'{e}ments de Calcul Tensoriel}, Armand Colin, Paris.\\

Liu X., Wang, J. {\&} Huang, L. [2007] ``Attractors of Fourth-Order
Chua's Circuit and Chaos Control,'' \textit{Int. J. Bifurcation and Chaos} 8, Vol. 17, 2705 -2722.\\

O'Malley, R.E. [1974] \textit{Introduction to Singular Perturbations}, Academic Press, New York.\\

O'Malley, R.E. [1991] \textit{Singular Perturbation Methods for Ordinary Differential Equations}, Springer-Verlag, New York.\\

Poincar\'{e}, H. [1881] ``Sur les courbes d\'{e}finies par une
\'{e}quation
diff\'{e}rentielle,'' \textit{J. Math. Pures et Appl.}, S\'{e}rie III, 7, 375-422.\\

Poincar\'{e}, H. [1882] ``Sur les courbes d\'{e}finies par une
\'{e}quation diff\'{e}rentielle,'' \textit{J. de Math Pures Appl.},
S\'{e}rie III, 8, 251-296.\\

Poincar\'{e}, H. [1885] ``Sur les courbes d\'{e}finies par une
\'{e}quation
diff\'{e}rentielle,'' \textit{J. Math. Pures et Appl.}, S\'{e}rie IV, 1, 167-244.\\

Poincar\'{e}, H. [1886] ``Sur les courbes d\'{e}finies par une \'{e}quation
diff\'{e}rentielle,'' \textit{J. Math. Pures et Appl.}, S\'{e}rie IV, 2, 151-217.\\

Postnikov, M. [1981] \textit{Le\c{c}ons de G\'{e}om\'{e}trie -- Alg\`{e}bre lin\'{e}aire et G\'{e}om\'{e}trie Diff\'{e}rentielle}, Editions Mir, Moscou.\\

Rossetto, B. [1993] ``Chua's circuit as a slow-fast autonomous dynamical.
system,'' \textit{J. Circuits Syst. Comput.} 3(\ref{eq2}), 483-496.\\

Rossetto, B., Lenzini, T., Ramdani, S. {\&} Suchey, G. [1998] ``Slow-fast
autonomous dynamical systems,'' \textit{Int. J. Bifurcation and Chaos}, 8, Vol. 11, 2135-2145.\\

Schlomiuk, D. [1993] ``Elementary first integrals of differential equations
and invariant algebraic curves,'' \textit{Expositiones Mathematicae}, 11, 433-454.\\

Thamilmaran, K., Lakshmanan M. {\&} Venkatesan A. [2004]
``Hyperchaos in. a
modified canonical Chua's circuit,'' \textit{Int. J. Bifurcation and Chaos}, vol. 14, 221-243.\\

Tikhonov, N. [1948] ``On the dependence of solutions of differential
equations on a small parameter,''\textit{ Mat. Sb.}, 22 : 2, 193--204 (In Russian).\\

Tsuneda, A. [2005] ``A gallery of attractors from smooth Chua's equation,''
\textit{Int. J. Bifurcation and Chaos}, vol. 15, 1-49.\\

Veronis, G. [1966] ``Motions at subcritical values of the Rayleigh number in
a rotating fluid,'' \textit{J. Fluid Mech.}, Vol. 24, 545--554.\\

Wasow, W.R. [1965] \textit{Asymptotic Expansions for Ordinary Differential Equations}, Wiley-Interscience, New York.\\

\newpage

\section*{Appendix}

\appendix

The aim of this appendix is to present definitions inherent to
\textit{Differential Geometry} such as the concept of$
n$-dimensional\textit{ smooth curves}, \textit{generalized}
\textit{Fr\'{e}net frame}, Gram-Schmidt orthogonalization process
for computing \textit{curvatures} of \textit{trajectory curves} in
Euclidean $n$-space as well as proofs of identities (A.10, A.15 {\&}
A.16) used in this work. Then, it is established that
\textit{curvature of the flow} for \textit{slow invariant manifold}
analytical equation determination of high-dimensional dynamical
systems generalizes on the one hand the \textit{tangent linear
system} \textit{approximation }[Rossetto \textit{et al.}, 1998] and
encompasses on the other hand the so-called \textit{Geometric
Singular Perturbation Theory} [Fenichel, 1979].

\section{Differential Geometry}

Within the framework of \textit{Differential Geometry}, $n$-dimensional \textit{smooth} \textit{curves}, i.e., \textit{smooth curves} in Euclidean $n-$space are
defined by a \textit{regular parametric representation} in \textit{terms of arc length} also called \textit{natural representation} or \textit{unit speed parametrization}. According to Herman Gluck [1966] local
metrics properties of \textit{curvatures} may be directly deduced from \textit{curves parametrized in terms of time} and so \textit{natural representation} is not
necessary.

\subsection{Concept of curves}

Considering \textit{trajectory curve} $\vec {X}\left( t \right)$ integral of a $n$-dimensional dynamical
system (\ref{eq1}) as ``the motion of a variable point in a space of dimension $n$''
leads to the following definition.

\begin{de}

A \textit{smooth} \textit{parametrized}\footnote{ with any kind of
parametrization.} \textit{curve} in $\mathbb{R}^n$ is a
\textit{smooth} map $\vec {X}\left( t \right):\left[ {a,b} \right]
\to \mathbb{R}^n$ from a closed interval $\left[ {a,b} \right]$ into
$\mathbb{R}^n$. A map is said to be \textit{smooth} or
\textit{infinitely many times differentiable} if the coordinate
functions $x_1 ,x_2 ,\ldots ,x_n $ of $\vec {X} = \left[ {x_1 ,x_2
,\ldots ,x_n } \right]^t$ have continuous partial derivatives of any
order.

\end{de}

\subsection{Gram-Schmidt process and Fr\'{e}net moving frame}

There are many moving frames along a \textit{trajectory curve} and
most of them are not related to local metrics properties of
\textit{curvatures}. This is not the case for Fr\'{e}net frame
[1852]. In this sub-section generalized Fr\'{e}net frame for
$n$-dimensional \textit{trajectory curves} in Euclidean $n$-space is
recalled.\\

Let's suppose that the \textit{trajectory curve} $\vec {X}\left( t
\right)$, \textit{parametrized in terms of time}, is of\textit{
general type }in $\mathbb{R}^n$, i.e., that the first $n - 1$ time
derivatives: $\dot {\vec {X}}\left( t \right)$, $\ddot {\vec
{X}}\left( t \right)$, \ldots , $\mathop {\vec {X}}\limits^{\left(
{n - 1} \right)} \left( t \right)$, are linearly independent for all
$t$.\\

A moving frame along a \textit{trajectory curve} $\vec {X}\left( t \right)$ of \textit{general type }in $\mathbb{R}^n$ is a
collection of $i$ vectors $\vec {u}_1 \left( t \right)$, $\vec {u}_2 \left(
t \right)$, \ldots , $\vec {u}_i \left( t \right)$ along $\vec {X}\left( t
\right)$ forming an \textit{orthogonal basis}, such that:

\begin{equation}
\label{eq44}
\vec {u}_i \left( t \right) \cdot \vec {u}_j \left( t \right) = 0
\end{equation}

for all $t$ and for $i \ne j$. These vectors $\vec {u}_i \left( t \right)$
may be determined by application of the Gram-Schmidt orthogonalization
process described below.

\textbf{Gram-Schmidt process. }Let $\dot {\vec {X}}\left( t
\right)$, $\ddot {\vec {X}}\left( t \right)$, \ldots , $\mathop
{\vec {X}}\limits^{\left( {n - 1} \right)} \left( t \right)$ be
linearly independent vectors for all $t$ in $\mathbb{R}^n$.
According to Gram-Schmidt process [Lichnerowicz, 1950 p. 30, Gluck,
1966] the vectors $\vec {u}_1 \left( t \right)$, $\vec {u}_2 \left(
t \right)$, \ldots , $\vec {u}_i \left( t \right)$ forming an
orthogonal basis are defined by:

\[
\vec {u}_1 \left( t \right) = \dot {\vec {X}}\left( t \right)
\]

\[
\vec {u}_2 \left( t \right) = \ddot {\vec {X}}\left( t \right) - \left(
{\frac{\vec {u}_1 \left( t \right) \cdot \ddot {\vec {X}}\left( t
\right)}{\vec {u}_1 \left( t \right) \cdot \vec {u}_1 \left( t \right)}}
\right)\vec {u}_1 \left( t \right)
\]

\[
\vec {u}_3 \left( t \right) = \dddot {\vec {X}}\left( t \right) - \left(
{\frac{\vec {u}_1 \left( t \right) \cdot \dddot {\vec {X}}\left( t
\right)}{\vec {u}_1 \left( t \right) \cdot \vec {u}_1 \left( t \right)}}
\right)\vec {u}_1 \left( t \right) - \left( {\frac{\vec {u}_2 \left( t
\right) \cdot \dddot {\vec {X}}\left( t \right)}{\vec {u}_2 \left( t \right)
\cdot \vec {u}_2 \left( t \right)}} \right)\vec {u}_2 \left( t \right)
\]

\begin{center}
\ldots \ldots \ldots \ldots \ldots \ldots \ldots \ldots \ldots \ldots \ldots
\ldots \ldots \ldots \ldots \ldots \ldots \ldots \ldots \ldots .\ldots
\ldots \ldots
\end{center}

\begin{equation}
\label{eq45} \vec {u}_i \left( t \right) = \mathop {\vec
{X}}\limits^{\left( i \right)} \left( t \right) - \sum\limits_{j =
1}^{n - 1} {\left( {\frac{\vec {u}_j \left( t \right) \cdot \mathop
{\vec {X}}\limits^{\left( i \right)} \left( t \right)}{\vec {u}_j
\left( t \right) \cdot {\vec {u}_j} \left( t \right)}} \right)\vec
{u}_j \left( t \right)}
\end{equation}

\textbf{Generalized Fr\'{e}net moving frame. }Starting from the
vectors $\vec {u}_1 \left( t \right)$, $\vec {u}_2 \left( t
\right)$, \ldots , $\vec {u}_i \left( t \right)$ forming an
orthogonal basis, \textit{generalized Fr\'{e}net moving frame} for
the \textit{trajectory curve} $\vec {X}\left( t \right)$ of
\textit{general type }in $\mathbb{R}^n$ may be built. Thus
derivation with respect to time $t$ leads to the \textit{generalized
Fr\'{e}net formulas }in Euclidean $n$-space:

\begin{equation}
\label{eq46}
\dot {\vec {u}}_i \left( t \right) = v\sum\limits_{j = 1}^n {\alpha _{ij}
\vec {u}_j \left( t \right)}
\end{equation}

with $i = 1,2,\ldots ,n$ and where $v = \left\| \dot {\vec {X}} \right\| =
\left\| {\overrightarrow V } \right\|$ represents the Euclidean norm of the
velocity vector field. Moreover, according to Eq. (\ref{eq44}) implies that:

\begin{equation}
\label{eq47}
\dot {\vec {u}}_i \left( t \right) \cdot \vec {u}_j \left( t \right) + \vec
{u}_i \left( t \right) \cdot \dot {\vec {u}}_j \left( t \right) = 0
\end{equation}

So, $\alpha _{ii} = 0$ and $\alpha _{ij} = 0$ for $j < i - 1$. Thus, only
$\alpha _{i,i + 1} = - \alpha _{i + 1,i} $ are not identically zero.

Let's pose:

\begin{equation}
\label{eq48}
\kappa _1 = \alpha _{12} ,
\quad
\kappa _2 = \alpha _{23} , \quad \ldots ,
\quad
\kappa _{n - 1} = \alpha _{n - 1,n}
\end{equation}

The \textit{generalized Fr\'{e}net formulas} associated with a \textit{trajectory curve} in Euclidean $n$-space read:

\begin{equation}
\label{eq49}
\left\{ {{\begin{array}{*{20}c}
 {\dot {\vec {u}}_1 \left( t \right) = v\kappa _1 \vec {u}_2 \left( t
\right)} \\
 {\dot {\vec {u}}_2 \left( t \right) = v\left[ { - \kappa _1 \vec {u}_1
\left( t \right) + \kappa _2 \vec {u}_3 \left( t \right)} \right]} \\
 {\dot {\vec {u}}_3 \left( t \right) = - v\kappa _2 \vec {u}_2 \left( t
\right)} \\
 { \cdots \cdots \cdots \cdots \cdots \cdots \cdots } \\
 {\dot {\vec {u}}_{n - 1} \left( t \right) = v\left[ { - \kappa _{n - 2}
\vec {u}_{n - 2} \left( t \right) + \kappa _{n - 1} \vec {u}_n \left( t
\right)} \right]} \\
 {\dot {\vec {u}}_n \left( t \right) = - v\kappa _{n - 1} \vec {u}_{n - 1}
\left( t \right)} \\
\end{array} }} \right.
\end{equation}

The functions $\kappa _1 $, $\kappa _2 $, \ldots , $\kappa _{n - 1} $ are
called \textit{curvatures} of \textit{trajectory curve} $\vec {X}\left( t \right)$ of \textit{general type }in $\mathbb{R}^n$ and $\kappa _{n -
1} $ is analogous to the \textit{torsion}.

Thus, according to Gluck [1966, p. 702] \textit{curvatures} of \textit{trajectory curves }$\vec {X}\left( t \right)$
integral of any $n$-dimensional dynamical systems (\ref{eq1}) may be defined by:

\begin{equation}
\label{eq50}
\kappa _i = \frac{\left\| {\vec {u}_{i + 1} \left( t \right)}
\right\|}{\left\| {\vec {u}_1 \left( t \right)} \right\|\left\| {\vec {u}_i
\left( t \right)} \right\|}
\end{equation}

Since $1 \leqslant i \leqslant n - 1$ a $n$-dimensional \textit{trajectory} \textit{curve} has $\left( {n - 1}
\right)$ \textit{curvatures}.

\subsection{Fr\'{e}net trihedron and curvatures of space curves}

\textbf{Fr\'{e}net trihedron. }While normalizing the basis vectors
$\vec {u}_1 \left( t \right)$, $\vec {u}_2 \left( t \right)$, \ldots
, $\vec {u}_n \left( t \right)$ obtained with the Gram-Schmidt
process, the so-called Fr\'{e}net trihedron for \textit{space
curves} may be deduced.\\
Hence, it may be stated that: $\left(
{\frac{\vec {u}_1 \left( t \right)}{\left\| {\vec {u}_1 \left( t
\right)} \right\|},\frac{\vec {u}_2 \left( t \right)}{\left\| {\vec
{u}_2 \left( t \right)} \right\|},\frac{\vec {u}_3 \left( t
\right)}{\left\| {\vec {u}_3 \left( t \right)} \right\|}} \right) =
\left( {\vec {\tau },\vec {n},\vec {b}} \right)$ where $\vec {\tau
}$, $\vec {n}$ and $\vec {b}$ are respectively the tangent, normal
and binormal unit vectors.

Let's notice that the three first time derivatives: $\dot {\vec
{X}}\left( t \right)$, $\ddot {\vec {X}}\left( t \right)$ and
$\dddot {\vec {X}}\left( t \right)$ represent respectively the
velocity, acceleration and over-acceleration vector field namely:
$\overrightarrow V \left( t \right)$, $\vec {\gamma }\left( t
\right)$ and $\dot {\vec {\gamma }}\left( t \right)$. Thus, from the
\textit{generalized Fr\'{e}net formulas} (\ref{eq49}) and Gluck
formulae (\ref{eq50}) of \textit{curvatures}, the first and second
curvatures of \textit{space curves}, i.e., \textit{curvature} and
\textit{torsion} may be found again.\\

\textbf{First curvature.} While replacing basis vectors $\vec {u}_1 \left( t
\right)$ and $\vec {u}_2 \left( t \right)$ resulting from the Gram-Schmidt
process in formulae (\ref{eq50}), (first) \textit{curvature} of \textit{space trajectory curves} is given by:

\begin{equation}
\label{eq51}
\kappa _1 \left( t \right) = \frac{\left\| {\vec {u}_2 \left( t \right)}
\right\|}{\left\| {\vec {u}_1 \left( t \right)} \right\|^2} = \frac{\left\|
{\vec {\gamma }\left( t \right) \wedge \overrightarrow V \left( t \right)}
\right\|}{\left\| {\overrightarrow V } \right\|^3}
\end{equation}

\begin{proof}

While using the Lagrange identity it may be established that:\\

$\left\| {\vec {u}_1 } \right\|^2\left\| {\vec {u}_2 } \right\|^2 =
\left\| {\dot {\vec {X}} \wedge \ddot {\vec {X}}} \right\|^2$. So,
\textit{curvature} $\kappa _1 $ reads:

\[
\kappa _1 \left( t \right) = \frac{\left\| {\vec {u}_2 \left( t \right)}
\right\|}{\left\| {\vec {u}_1 \left( t \right)} \right\|^2} = \frac{\left\|
{\vec {\gamma }\left( t \right) \wedge \overrightarrow V \left( t \right)}
\right\|}{\left\| {\overrightarrow V } \right\|^3}
\]

\end{proof}

\textbf{Second curvature.} While replacing basis vectors $\vec {u}_1 \left(
t \right)$, $\vec {u}_2 \left( t \right)$ and $\vec {u}_3 \left( t \right)$
resulting from the Gram-Schmidt process in formulae (\ref{eq50}), (second)
\textit{curvature}, i.e., \textit{torsion} of \textit{space trajectory curves} is given by:

\begin{equation}
\label{eq52}
\kappa _2 \left( t \right) = \frac{\left\| {\vec {u}_3 \left( t \right)}
\right\|}{\left\| {\vec {u}_1 \left( t \right)} \right\|\left\| {\vec {u}_2
\left( t \right)} \right\|} = - \frac{\dot {\vec {\gamma }}\left( t \right)
\cdot \left( {\vec {\gamma }\left( t \right) \wedge \overrightarrow V \left(
t \right)} \right)}{\left\| {\vec {\gamma }\left( t \right) \wedge
\overrightarrow V \left( t \right)} \right\|^2}
\end{equation}

\begin{proof}

Still using the Lagrange identity, i.e., $\left\| {\vec {u}_1 }
\right\|^2\left\| {\vec {u}_2 } \right\|^2 = \left\| {\dot {\vec
{X}} \wedge \ddot {\vec {X}}} \right\|^2$\textit{ torsion} $\kappa
_2 $ reads:

\[
\kappa _2 = \frac{\left\| {\vec {u}_3 \left( t \right)} \right\|}{\left\|
{\vec {u}_1 \left( t \right)} \right\|\left\| {\vec {u}_2 \left( t \right)}
\right\|} = - \frac{\dot {\vec {\gamma }} \cdot \left( {\vec {\gamma }
\wedge \overrightarrow V } \right)}{\left\| {\vec {\gamma } \wedge
\overrightarrow V } \right\|^2}
\]

\end{proof}

\subsection{Identities proofs }

\textbf{Identity A.10.}

\begin{equation}
\label{eq53} \left[ {\dot {\vec {X}},\ddot {\vec {X}},\ldots
,\mathop {\vec{X}}\limits^{\left( n \right)} } \right] = \dot {\vec
{X}} \cdot \left( {\ddot {\vec {X}} \wedge \dddot {\vec {X}} \wedge
\ldots \wedge \mathop {\vec {X}}\limits^{\left( n \right)} } \right)
= \left\| {\vec {u}_1 } \right\|\left\| {\vec {u}_2 } \right\|\ldots
\left\| {\vec {u}_n } \right\|
\end{equation}

\begin{proof}

According to Postnikov [1981, p. 215], Gram-Schmidt process can be
written

\begin{equation}
\label{eq54} \vec {u}_n \left( t \right) = \sum\limits_{i = 1}^n
{\beta _{ni} \mathop {\vec {X}}\limits^{\left( i \right)} \left( t
\right)}
\end{equation}

Comparing (\ref{eq54}) with (\ref{eq45}) leads to:

\begin{equation}
\label{eq55}
\beta _{ii} = 1
\end{equation}

Using (\ref{eq54}) and (\ref{eq55}), the \textit{inner product} $\vec {u}_1 \cdot \left( {\vec {u}_2 \wedge
\ldots \wedge \vec {u}_n } \right)$ reads:

\begin{equation}
\label{eq56} \vec {u}_1 \cdot \left( {\vec {u}_2 \wedge \ldots
\wedge \vec {u}_n } \right) = \dot {\vec {X}} \cdot \left( {\ddot
{\vec {X}},\ldots ,\mathop {\vec {X}}\limits^{\left( n \right)} }
\right)
\end{equation}

But, since Gram-Schmidt basis is orthogonal, the \textit{inner product} $\vec {u}_1 \cdot \left(
{\vec {u}_2 \wedge \ldots \wedge \vec {u}_n } \right)$ reads too:

\begin{equation}
\label{eq57}
\vec {u}_1 \cdot \left( {\vec {u}_2 \wedge \ldots \wedge \vec {u}_n }
\right) = \left\| {\vec {u}_1 } \right\|\left\| {\vec {u}_2 } \right\|
\cdots \left\| {\vec {u}_n } \right\|
\end{equation}

From (\ref{eq56}) and (\ref{eq57}) it follows that: $\dot {\vec {X}}
\cdot \left( {\ddot {\vec {X}},\ldots ,\mathop {\vec
{X}}\limits^{\left( n \right)} } \right) = \left\| {\vec {u}_1 }
\right\|\left\| {\vec {u}_2 } \right\| \cdots \left\| {\vec {u}_n }
\right\|$.

For example, while omitting the \textit{time} variable the three first Gram-Schmidt
vectors read:

\begin{table}[htbp]
\begin{center}
\begin{tabular}
{|c|}
\hline
$ \mbox{} $\\
$\vec {u}_1 = \beta _{11} \dot {\vec {X}}$\\
$ \mbox{}$\\
\hline
$ \mbox{ } $\\
$\vec {u}_2 = \beta _{21} \dot {\vec {X}} + \beta _{22} \ddot {\vec {X}}$ \\
$ \mbox{ } $\\
\hline
$ \mbox{ } $\\
$\vec {u}_3 = \beta _{31} \dot {\vec {X}} + \beta _{32} \ddot {\vec {X}} + \beta _{33} \dddot {\vec {X}}$ \\
$ \mbox{ } $\\
\hline
\end{tabular}
\label{tab1}
\end{center}
\end{table}

Using (\ref{eq54}) and (\ref{eq55}), the \textit{inner product} $\vec {u}_1 \cdot \left( {\vec {u}_2 \wedge
\vec {u}_3 } \right)$ reads:

\[
\vec {u}_1 \cdot \left( {\vec {u}_2 \wedge \vec {u}_3 } \right) = \beta
_{11} \beta _{22} \beta _{33} \dot {\vec {X}} \cdot \left( {\ddot {\vec {X}}
\wedge \dddot {\vec {X}}} \right) = \left\| {\vec {u}_1 } \right\|\left\|
{\vec {u}_2 } \right\|\left\| {\vec {u}_3 } \right\|
\]

\end{proof}

\textbf{Identity A.15.}

\begin{equation}
\label{eq58}
J\vec {a}_1 .\left( {J\vec {a}_2 \wedge \ldots \wedge J\vec {a}_n } \right)
= Det\left( J \right)\vec {a}_1 .\left( {\vec {a}_2 \wedge \ldots \wedge
\vec {a}_n } \right)
\end{equation}

\begin{proof}

Equation (\ref{eq58}) may also be written with \textit{inner
product}:

\[
J\vec {a}_1 .\left( {J\vec {a}_2 \wedge \ldots \wedge J\vec {a}_n } \right)
= \left[ {J\vec {a}_1 ,J\vec {a}_2 ,\ldots ,J\vec {a}_n } \right] =
Det\left( {J\vec {a}_1 ,J\vec {a}_2 ,\ldots ,J\vec {a}_n } \right)
\]

But, since $\left( {J\vec {a}_1 ,J\vec {a}_2 ,\ldots ,J\vec {a}_n } \right)
= J\left( {\vec {a}_1 ,\vec {a}_2 ,\ldots ,\vec {a}_n } \right)$ and while
using determinant product property, i.e., determinant of the product is
equal to the product of the determinants we have:

\begin{eqnarray}
J\vec {a}_1 .\left( {J\vec {a}_2 \wedge \ldots \wedge J\vec {a}_n }
\right) & = & \left[ {J\vec {a}_1 ,J\vec {a}_2 ,\ldots ,J\vec {a}_n
} \right] \nonumber \\ & = & Det\left( {J\vec {a}_1 ,J\vec {a}_2
,\ldots ,J\vec {a}_n } \right)\nonumber \\ & = &  Det\left( J
\right)Det\left( {\vec {a}_1 ,\vec {a}_2 ,\ldots ,\vec {a}_n }
\right)
\end{eqnarray}

\end{proof}

\textbf{Identity A.16.}

\begin{eqnarray}
\label{eq59} J\vec {a}_1 .\left( {\vec {a}_2 \wedge \ldots \wedge
\vec {a}_n } \right) + \vec {a}_1 .\left( {J\vec {a}_2 \wedge \ldots
\wedge \vec {a}_n } \right) + & \ldots & \nonumber \\
+ \vec {a}_1 .\left( {\vec {a}_2 \wedge \ldots \wedge J\vec {a}_n }
\right) = Tr\left( J \right)\vec {a}_1 .\left( {\vec {a}_2 \wedge
\ldots \wedge \vec {a}_n } \right)
\end{eqnarray}

\begin{proof}

The proof is based on Trace properties such as \textit{linearity}
and \textit{similarity-invariant}.

\end{proof}

\section{Tangent linear system approximation}

\subsection{Assumptions}

The \textit{generalized tangent linear system} \textit{approximation
}requires that the dynamical system (\ref{eq1}) satisfies the
following assumptions:\\

\textbf{(H}$_{1}$\textbf{)} The components $f_i $, of the velocity
vector field $\overrightarrow \Im \left( \vec {X} \right)$ defined in E are
continuous, $C^\infty $ functions in E and with values included in
$\mathbb{R}$.

\textbf{(H}$_{2}$\textbf{)} The dynamical system (\ref{eq1}) satisfies the
\textit{nonlinear part condition} [Rossetto \textit{et al.}, 1998], i.e., that the influence of the nonlinear part of the
Taylor series of the velocity vector field $\overrightarrow \Im \left( \vec
{X} \right)$ of this system is overshadowed by the fast dynamics of the
linear part.

\begin{equation}
\label{eq60}
\overrightarrow \Im \left( \vec {X} \right) = \overrightarrow \Im \left(
{\vec {X}_0 } \right) + \left( {\vec {X} - \vec {X}_0 } \right)\left.
{\frac{d\overrightarrow \Im \left( \vec {X} \right)}{d\vec {X}}}
\right|_{\vec {X}_0 } + O\left( {\left( {\vec {X} - \vec {X}_0 } \right)^2}
\right)
\end{equation}

\textbf{(H}$_{3}$\textbf{)} The functional jacobian matrix associated to
dynamical system (\ref{eq1}) has at least a ``fast'' eigenvalue $\lambda _1 $, i.e.,
with the largest absolute value of the real part.

\newpage

\subsection{Corollaries}

To the dynamical system (\ref{eq1}) is associated a \textit{tangent linear system }defined as follows:

\begin{equation}
\label{eq61}
\frac{d\delta \vec {X}}{dt} = J\left( {\vec {X}_0 } \right)\delta \vec {X}
\end{equation}

where

$\delta \vec {X} = \vec {X} - \vec {X}_0 ,
\quad
\vec {X}_0 = \vec {X}\left( {t_0 } \right)$and $\left.
{\frac{d\overrightarrow \Im \left( \vec {X} \right)}{d\vec {X}}}
\right|_{\vec {X}_0 } = J\left( {\vec {X}_0 } \right)$

\textbf{Corollary B.1.}

The\textit{ nonlinear part condition }implies that the velocity
varies slowly in the vicinity of the \textit{slow manifold}. This
involves that the functional jacobian $J\left( {\vec {X}_0 }
\right)$ varies slowly with time, i.e.,~

\begin{equation}
\label{eq62}
\frac{dJ}{dt}\left( {\vec {X}_0 } \right) = 0
\end{equation}

The solution of the \textit{tangent linear} \textit{system} (\ref{eq61}) is written:

\begin{equation}
\label{eq63}
\delta \vec {X} = e^{J\left( {\vec {X}_0 } \right)\left( {t - t_0 }
\right)}\delta \vec {X}\left( {t_0 } \right)
\end{equation}

So,

\begin{equation}
\label{eq64}
\delta \vec {X} = \sum\limits_{i = 1}^n {a_i } \overrightarrow {Y_{\lambda
_i } }
\end{equation}

where $n$ is the dimension of the eigenspace, $a_{i}$ represents
coefficients depending explicitly on the co-ordinates of space and
implicitly on time and $\overrightarrow {Y_{\lambda _i } } $ the
eigenvectors associated in the functional jacobian of the
\textit{tangent linear system}.\\

\textbf{Corollary B.2.}

In the vicinity of the \textit{slow manifold }the velocity of the dynamical system (\ref{eq1}) and that of
the \textit{tangent linear system} (\ref{eq61}) merge.

\begin{equation}
\label{eq65}
\frac{d\delta \vec {X}}{dt} = \overrightarrow V _T \approx \overrightarrow V
\end{equation}

where $\overrightarrow V _T $ represents the velocity vector associated with
the \textit{tangent linear system}.

The \textit{tangent linear system approximation} consists in spreading the velocity vector field $\overrightarrow V $ on
the eigenbasis associated to the functional jacobian matrix of the \textit{tangent linear system}.

While taking account of (\ref{eq61}) and (\ref{eq64}) we have according to (\ref{eq65}):

\begin{equation}
\label{eq66}
\frac{d\delta \vec {X}}{dt} = J\left( {\vec {X}_0 } \right)\delta \vec {X} =
J\left( {\vec {X}_0 } \right)\sum\limits_{i = 1}^n {a_i } \overrightarrow
{Y_{\lambda _i } } = \sum\limits_{i = 1}^n {a_i } J\left( {\vec {X}_0 }
\right)\overrightarrow {Y_{\lambda _i } } = \sum\limits_{i = 1}^n {a_i }
\lambda _i \overrightarrow {Y_{\lambda _i } }
\end{equation}

Thus, Corollary B.2 provides:

\begin{equation}
\label{eq67}
\frac{d\delta \vec {X}}{dt} = \overrightarrow V _T \approx \overrightarrow V
= \sum\limits_{i = 1}^n {a_i } \lambda _i \overrightarrow {Y_{\lambda _i } }
\end{equation}

Then, existence of an evanescent mode in the vicinity of the \textit{slow manifold} implies
according to Tikhonov's theorem [1952] that $a_1 \lambda _1 \ll 1$. So, the
\textit{coplanarity} condition (\ref{eq67}) provides the \textit{slow manifold} equation of a $n$-dimensional dynamical system
(\ref{eq1}).

\begin{prop}

The coplanarity condition between the velocity vector field
$\overrightarrow V$ of a n-dimensional dynamical system and the slow
eigenvectors $\overrightarrow {Y_{\lambda _i } } $ associated to the
slow eigenvalues $\lambda _i $ of its functional jacobian provides
the slow manifold equation of such system.

\begin{equation}
\label{eq68}
\overrightarrow V = \sum\limits_{i = 2}^n {a_i \overrightarrow {Y_{\lambda
_i } } } = a_2 \overrightarrow {Y_{\lambda _2 } } + \ldots + a_n
\overrightarrow {Y_{\lambda _n } }
\quad
 \Leftrightarrow
\quad
\phi \left( \vec {X} \right) = \overrightarrow V .\left( {\overrightarrow
{Y_{\lambda _2 } } \wedge \ldots \wedge \overrightarrow {Y_{\lambda _n } } }
\right) = 0
\end{equation}

\end{prop}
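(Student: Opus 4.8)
The plan is to read this ``coplanarity'' statement as a purely linear-algebraic equivalence describing the position of the velocity vector $\overrightarrow V$ relative to the $\left( n - 1 \right)$ slow eigenvectors, and to prove it through the determinantal characterization of linear dependence. The starting point is the same identification used in the proof of Identity A.15: for $n - 1$ vectors in $\mathbb{R}^n$, the inner product with their wedge product is a determinant, so
\[
\phi \left( \vec {X} \right) = \overrightarrow V \cdot \left( {\overrightarrow {Y_{\lambda _2 } } \wedge \ldots \wedge \overrightarrow {Y_{\lambda _n } } } \right) = Det\left( {\overrightarrow V ,\overrightarrow {Y_{\lambda _2 } } ,\ldots ,\overrightarrow {Y_{\lambda _n } } } \right).
\]
This determinant of the matrix whose columns are the velocity vector and the slow eigenvectors vanishes exactly when these $n$ vectors are linearly dependent, which is precisely the geometric content of coplanarity.

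For the direct implication I would assume $\overrightarrow V = \sum_{i = 2}^n a_i \overrightarrow {Y_{\lambda _i } }$. Then $\overrightarrow V$ belongs to the span of the slow eigenvectors, the family $\overrightarrow V ,\overrightarrow {Y_{\lambda _2 } } ,\ldots ,\overrightarrow {Y_{\lambda _n } }$ is linearly dependent, and the determinant above is zero; equivalently, distributing the wedge product linearly over the sum, every term repeats one of the $\overrightarrow {Y_{\lambda _i } }$ and vanishes by antisymmetry of $\wedge$. Hence $\phi \left( \vec {X} \right) = 0$.

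For the converse I would assume $\phi \left( \vec {X} \right) = 0$, so the $n$ vectors are linearly dependent. Here I invoke that the slow eigenvectors $\overrightarrow {Y_{\lambda _2 } } ,\ldots ,\overrightarrow {Y_{\lambda _n } }$ are themselves linearly independent, being eigenvectors attached to distinct slow eigenvalues of the functional jacobian of the tangent linear system. The dependence relation must therefore carry a nonzero coefficient on $\overrightarrow V$, and solving for $\overrightarrow V$ expresses it as $\sum_{i = 2}^n a_i \overrightarrow {Y_{\lambda _i } }$, which closes the equivalence.

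The main obstacle is exactly this independence hypothesis in the converse: one must guarantee that the $\left( n - 1 \right)$ slow eigenvectors span an honest $\left( n - 1 \right)$-dimensional subspace, so that a vanishing determinant cannot be accounted for among them alone. This holds generically when the slow eigenvalues are simple and distinct; for repeated or complex-conjugate slow eigenvalues one replaces the $\overrightarrow {Y_{\lambda _i } }$ by a real basis of the slow eigenspace, and the determinantal argument is unchanged, while $\phi \left( \vec {X} \right) = 0$ continues to express membership of $\overrightarrow V$ in that slow subspace, i.e. the \emph{slow manifold} equation.
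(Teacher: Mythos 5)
Your proof is correct, but it takes a genuinely different route from the paper --- which, notably, attaches no proof environment to this proposition at all. In the paper the biconditional in Eq.~(\ref{eq68}) is left implicit, and the real justification is the dynamical derivation that precedes it: the solution of the tangent linear system spanned on the eigenbasis, $\delta \vec {X} = \sum_{i = 1}^n a_i \overrightarrow {Y_{\lambda _i } }$, Corollary B.2's identification $\overrightarrow V _T \approx \overrightarrow V$, the spectral computation $\overrightarrow V \approx \sum_{i = 1}^n a_i \lambda _i \overrightarrow {Y_{\lambda _i } }$ of Eqs.~(\ref{eq66})--(\ref{eq67}), and the Tikhonov evanescence of the fast mode ($a_1 \lambda _1 \ll 1$) near the slow manifold, which strips the $\overrightarrow {Y_{\lambda _1 } }$ component from $\overrightarrow V$. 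You instead isolate the purely linear-algebraic content and prove the equivalence via $\phi \left( \vec {X} \right) = Det\left( {\overrightarrow V ,\overrightarrow {Y_{\lambda _2 } } ,\ldots ,\overrightarrow {Y_{\lambda _n } } } \right)$: the forward direction by multilinearity and antisymmetry of the wedge, the converse by observing that a nontrivial dependence relation must load $\overrightarrow V$ once the slow eigenvectors are independent. This buys rigor the paper omits: you make explicit the independence hypothesis on $\overrightarrow {Y_{\lambda _2 } } ,\ldots ,\overrightarrow {Y_{\lambda _n } }$ that the converse genuinely requires --- a hypothesis nowhere stated in the paper, though it is silently used, for instance in identity~(\ref{eq70}), which presumes the eigenvectors form a basis dual to ${ }^t\overrightarrow {Y_{\lambda _1 } }$ --- and your real-basis substitute for complex-conjugate or repeated slow eigenvalues is sound, since replacing a conjugate pair by its real and imaginary parts rescales the wedge by a nonzero constant without changing the zero set. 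What your route does not, and from linear algebra alone cannot, deliver is the dynamical assertion that this locus is the \emph{slow manifold}: that rests on the paper's hypotheses (H$_1$)--(H$_3$) and the tangent-linear/Tikhonov argument, which your reading correctly flags as an external input rather than something the determinantal equivalence supplies.
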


An alternative proposed by Rossetto \textit{et al.} [1998] uses the ``fast'' eigenvector on
the left associated with the ``fast'' eigenvalue of the transposed
functional jacobian of the \textit{tangent linear system}. In this case the velocity vector field
$\overrightarrow V $ is then orthogonal with the ``fast'' eigenvector on the
left. This \textit{orthogonality} condition also provides the \textit{slow manifold} equation of a $n$-dimensional
dynamical system (\ref{eq1}).

\begin{prop}

The orthogonality condition between the velocity vector field
$\overrightarrow V $ of a n-dimensional dynamical system and the
fast eigenvector ${ }^t\overrightarrow {Y_{\lambda _1 } } $ on the
left associated with the fast eigenvalue $\lambda _1 $ of its
transposed functional jacobian provides the slow manifold equation
of such system.

\begin{equation}
\label{eq69}
\phi \left( \vec {X} \right) = \mbox{ }\overrightarrow V \cdot {
}^t\overrightarrow {Y_{\lambda _1 } } = 0
\end{equation}

\end{prop}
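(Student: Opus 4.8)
The plan is to start from the eigenbasis expansion of the velocity vector field already obtained in the \textit{tangent linear system approximation}, namely Eq.~(\ref{eq67}):
\[
\overrightarrow V = \sum_{i=1}^n a_i \lambda_i \overrightarrow{Y_{\lambda_i}},
\]
and to combine it with the \emph{biorthogonality} between the right eigenvectors $\overrightarrow{Y_{\lambda_i}}$ of the functional jacobian $J$ and the eigenvector ${}^t\overrightarrow{Y_{\lambda_1}}$, which by definition satisfies ${}^tJ\,{}^t\overrightarrow{Y_{\lambda_1}} = \lambda_1\,{}^t\overrightarrow{Y_{\lambda_1}}$, i.e.\ it is the ``fast'' eigenvector on the left of $J$.

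First I would prove the biorthogonality lemma. Writing $w = {}^t\overrightarrow{Y_{\lambda_1}}$ and $v_i = \overrightarrow{Y_{\lambda_i}}$, I would evaluate the scalar $w \cdot (J v_i)$ in two ways: on one side $J v_i = \lambda_i v_i$ gives $w \cdot (J v_i) = \lambda_i\,(w \cdot v_i)$; on the other side, transposing, $w \cdot (J v_i) = ({}^tJ\,w)\cdot v_i = \lambda_1\,(w \cdot v_i)$. Subtracting yields $(\lambda_i - \lambda_1)\,(w \cdot v_i) = 0$, so that $w \cdot v_i = 0$ for every $i \neq 1$ under assumption (H$_3$) that $\lambda_1$ is simple and distinct from the remaining eigenvalues.

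Next I would project $\overrightarrow V$ onto $w$. Inserting the expansion above and applying the lemma, all ``slow'' terms $i = 2,\ldots,n$ drop out and only the ``fast'' term survives:
\[
\overrightarrow V \cdot {}^t\overrightarrow{Y_{\lambda_1}} = a_1 \lambda_1 \left(\overrightarrow{Y_{\lambda_1}} \cdot {}^t\overrightarrow{Y_{\lambda_1}}\right).
\]
Finally, invoking the evanescent-mode argument already used to obtain Eq.~(\ref{eq68})---by Tikhonov's theorem the fast eigenmode decays in the vicinity of the \textit{slow manifold}, so $a_1\lambda_1 \ll 1$---the right-hand side vanishes, giving the announced orthogonality condition $\phi(\vec X) = \overrightarrow V \cdot {}^t\overrightarrow{Y_{\lambda_1}} = 0$ as the \textit{slow manifold} equation.

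The main obstacle will be conceptual rather than computational: the result is an \emph{approximation} resting on the asymptotic smallness of $a_1\lambda_1$ rather than an exact identity, and the biorthogonality step silently assumes a diagonalizable jacobian with a simple fast eigenvalue. The case of a complex-conjugate ``slow'' pair would need the eigenvector product $w \cdot v_i$ to be interpreted over $\mathbb{C}$ and the real slow-manifold equation recovered by pairing conjugate factors, exactly as in the ``conjugated equations'' device of Sec.~3; establishing that this pairing leaves a real, eigenvalue-free equation is the delicate point.
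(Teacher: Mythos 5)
Your proposal is correct, and it reaches the stated condition by a genuinely different route from the paper. The paper never proves the orthogonality proposition (\ref{eq69}) on its own: it first derives the coplanarity condition (\ref{eq68}) from the eigenbasis expansion (\ref{eq67}) together with the Tikhonov evanescent-mode argument $a_1 \lambda _1 \ll 1$, and then disposes of (\ref{eq69}) by declaring it equivalent to (\ref{eq68}) via the identity (\ref{eq70}), $\left( {\overrightarrow {Y_{\lambda _2 } } \wedge \ldots \wedge \overrightarrow {Y_{\lambda _n } } } \right) = { }^t\overrightarrow {Y_{\lambda _1 } }$, whose proof it calls ``obvious'' and never supplies. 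You instead project the expansion (\ref{eq67}) directly onto the left fast eigenvector and eliminate the slow terms with the biorthogonality lemma $\left( {\lambda _i - \lambda _1 } \right)\left( {{ }^t\overrightarrow {Y_{\lambda _1 } } \cdot \overrightarrow {Y_{\lambda _i } } } \right) = 0$, the surviving fast term being killed by the same evanescence hypothesis. This is not merely a cosmetic variant: your lemma is exactly the missing content of the paper's identity (\ref{eq70}), since ${ }^t\overrightarrow {Y_{\lambda _1 } }$ and the wedge product $\overrightarrow {Y_{\lambda _2 } } \wedge \ldots \wedge \overrightarrow {Y_{\lambda _n } }$ both annihilate the span of the slow right eigenvectors, whose orthogonal complement is one-dimensional when the eigenvectors are independent, so the two vectors are proportional (the paper's equality in (\ref{eq70}) in fact holds only up to a normalizing scalar, which is harmless for the vanishing condition). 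Your route thus buys a self-contained justification and makes explicit the hypotheses the paper leaves tacit --- a simple fast eigenvalue, a diagonalizable jacobian, and the complex-conjugate pairing handled by the ``conjugated equations'' device --- while the paper's route buys brevity and elevates the equivalence of the coplanarity and orthogonality conditions to the organizing statement. Your closing caveat that (\ref{eq69}) is an asymptotic approximation rather than an exact identity is likewise faithful to the paper, which asserts exactness only in the Tikhonov regime near the slow manifold.
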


\begin{prop}

Both coplanarity and orthogonality conditions providing the slow
manifold equation are equivalent.

\end{prop}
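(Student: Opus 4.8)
The plan is to show that the orthogonality condition (\ref{eq69}) and the coplanarity condition (\ref{eq68}) cut out the very same $(n-1)$-dimensional hyperplane of the phase space, so that the two functions $\phi\left(\vec{X}\right)$ agree up to a nonvanishing scalar factor and hence share the same zero set. First I would record the defining eigen-relations: the ``slow'' right eigenvectors satisfy $J\,\overrightarrow{Y_{\lambda_i}}=\lambda_i\,\overrightarrow{Y_{\lambda_i}}$ for $i=2,\ldots,n$, while the ``fast'' left eigenvector satisfies ${}^tJ\,{}^t\overrightarrow{Y_{\lambda_1}}=\lambda_1\,{}^t\overrightarrow{Y_{\lambda_1}}$, exactly the eigenequation already used in the three-dimensional Chua example.

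The decisive step is a biorthogonality lemma between left and right eigenvectors. Using the adjoint identity $\left({}^tJ\,\vec{z}\right)\cdot\vec{y}=\vec{z}\cdot\left(J\,\vec{y}\right)$, for each $i\neq 1$ one obtains
\[
\lambda_1\,{}^t\overrightarrow{Y_{\lambda_1}}\cdot\overrightarrow{Y_{\lambda_i}}=\bigl({}^tJ\,{}^t\overrightarrow{Y_{\lambda_1}}\bigr)\cdot\overrightarrow{Y_{\lambda_i}}={}^t\overrightarrow{Y_{\lambda_1}}\cdot\bigl(J\,\overrightarrow{Y_{\lambda_i}}\bigr)=\lambda_i\,{}^t\overrightarrow{Y_{\lambda_1}}\cdot\overrightarrow{Y_{\lambda_i}},
\]
so that $\left(\lambda_1-\lambda_i\right){}^t\overrightarrow{Y_{\lambda_1}}\cdot\overrightarrow{Y_{\lambda_i}}=0$. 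Since $\lambda_1$ is the fast eigenvalue and therefore distinct from each slow eigenvalue $\lambda_i$, this forces ${}^t\overrightarrow{Y_{\lambda_1}}\cdot\overrightarrow{Y_{\lambda_i}}=0$ for all $i=2,\ldots,n$.

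From here the equivalence is purely geometric. The coplanarity condition (\ref{eq68}) says precisely that $\overrightarrow{V}$ lies in the $(n-1)$-dimensional subspace $\mathrm{span}\{\overrightarrow{Y_{\lambda_2}},\ldots,\overrightarrow{Y_{\lambda_n}}\}$, whose normal direction is $\overrightarrow{Y_{\lambda_2}}\wedge\cdots\wedge\overrightarrow{Y_{\lambda_n}}$. By the lemma, ${}^t\overrightarrow{Y_{\lambda_1}}$ is orthogonal to every slow eigenvector, hence orthogonal to that whole subspace; as the orthogonal complement of an $(n-1)$-dimensional subspace is a line, ${}^t\overrightarrow{Y_{\lambda_1}}$ must be collinear with the normal, i.e. ${}^t\overrightarrow{Y_{\lambda_1}}=c\,\bigl(\overrightarrow{Y_{\lambda_2}}\wedge\cdots\wedge\overrightarrow{Y_{\lambda_n}}\bigr)$ for some scalar $c\neq 0$. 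Dotting with $\overrightarrow{V}$ gives $\overrightarrow{V}\cdot{}^t\overrightarrow{Y_{\lambda_1}}=c\,\overrightarrow{V}\cdot\bigl(\overrightarrow{Y_{\lambda_2}}\wedge\cdots\wedge\overrightarrow{Y_{\lambda_n}}\bigr)$, so the expressions in (\ref{eq69}) and (\ref{eq68}) vanish simultaneously and define the same slow manifold.

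I expect the main obstacle to be not the algebra but the genericity hypotheses implicit in the statement: the argument needs the $n$ eigenvectors to be linearly independent (so the wedge product is nonzero and the span is a genuine hyperplane) and the fast eigenvalue $\lambda_1$ to be simple and separated from the slow ones. The degenerate and complex-conjugate cases would require either passing to generalized eigenvectors or grouping conjugate pairs, so that the relevant real invariant subspaces, rather than individual eigenlines, play the role above. I would dispatch these by invoking that on the slow manifold the fast mode is evanescent, so only the real slow invariant subspace matters and the collinearity of ${}^t\overrightarrow{Y_{\lambda_1}}$ with the wedge of a real basis of that subspace persists.
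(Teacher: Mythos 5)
Your proof is correct and lands on the same pivot as the paper, but it supplies the substance the paper omits. The paper's entire proof is one line: it invokes the identity (\ref{eq70}), $\left( \overrightarrow{Y_{\lambda_2}} \wedge \overrightarrow{Y_{\lambda_3}} \wedge \ldots \wedge \overrightarrow{Y_{\lambda_n}} \right) = {}^t\overrightarrow{Y_{\lambda_1}}$, declares the proof ``obvious'', and stops — the identity itself is never proved, nor are the hypotheses under which it holds stated. You prove exactly this identity, in its correct form, via the standard biorthogonality of left and right eigenvectors: from $\left({}^tJ\,\vec{z}\right)\cdot\vec{y} = \vec{z}\cdot\left(J\,\vec{y}\right)$ you get $\left(\lambda_1 - \lambda_i\right)\,{}^t\overrightarrow{Y_{\lambda_1}}\cdot\overrightarrow{Y_{\lambda_i}} = 0$, hence ${}^t\overrightarrow{Y_{\lambda_1}}$ is orthogonal to the slow span and therefore collinear with its wedge-product normal. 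Your version buys two things beyond the paper. First, it corrects a small imprecision: (\ref{eq70}) as written asserts strict equality, which can hold only for a special normalization of the eigenvectors; the true statement is collinearity, ${}^t\overrightarrow{Y_{\lambda_1}} = c\left(\overrightarrow{Y_{\lambda_2}} \wedge \cdots \wedge \overrightarrow{Y_{\lambda_n}}\right)$ with $c \neq 0$, which — as you note — is all that is needed for the zero sets of (\ref{eq68}) and (\ref{eq69}) to coincide. Second, you make explicit the genericity hypotheses the paper leaves tacit (linear independence of the $n$ eigenvectors so the wedge is a genuine nonzero normal, and $\lambda_1$ simple and distinct from every slow eigenvalue so the biorthogonality division is licit), and you sketch how complex-conjugate pairs would be absorbed into real invariant subspaces. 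In short, the paper's proof is your final paragraph with the key lemma deleted; yours is the complete argument.
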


While using the following identity the proof is obvious:

\begin{equation}
\label{eq70}
\left( {\overrightarrow {Y_{\lambda _2 } } \wedge \overrightarrow
{Y_{\lambda _3 } } \wedge \ldots \wedge \overrightarrow {Y_{\lambda _n } } }
\right) = { }^t\overrightarrow {Y_{\lambda _1 } }
\end{equation}

Thus, \textit{coplanarity} and \textit{orthogonality} conditions are
completely equivalent.\\

Since for low-dimensional two and three dynamical systems the proof has been
already established [Ginoux \textit{et al.}, 2006] while using the \textit{Tangent Linear System Approximation}, for high-dimensional
dynamical systems it may be deduced from its generalization presented above.
Thus, according to the generalization of the \textit{Tangent Linear System Approximation} the \textit{slow manifold} equation of a
$n$-dimensional dynamical system may be written:

\begin{equation}
\label{eq71}
\phi \left( \vec {X} \right) = \overrightarrow V .\left( {\overrightarrow
{Y_{\lambda _2 } } \wedge \ldots \wedge \overrightarrow {Y_{\lambda _n } } }
\right) = 0
\quad
 \Leftrightarrow
\quad
\overrightarrow V = \sum\limits_{i = 2}^n {a_i \overrightarrow {Y_{\lambda
_i } } } = a_2 \overrightarrow {Y_{\lambda _2 } } + \ldots + a_n
\overrightarrow {Y_{\lambda _n } }
\end{equation}

In the framework of the \textit{Generalized} \textit{Tangent Linear System Approximation} the functional jacobian matrix associated to the
dynamical system has been supposed to be stationary:

\begin{equation}
\label{eq72}
\frac{dJ}{dt} = 0
\end{equation}

As a consequence, time derivatives of acceleration vectors reads:

\[
\mathop {\vec {\gamma }}\limits^{\left( n \right)} = J^{\left( {n +
1} \right)}\overrightarrow V = J^{\left( n \right)}\vec {\gamma }
\]

Then, mapping the \textit{flow} of the \textit{tangent linear system}, i.e., functional jacobian operator $J$ to the
velocity vector field spanned on the eigenbasis (\ref{eq71}) leads to:

\[
J\overrightarrow V = \vec {\gamma } = \sum\limits_{i = 2}^n {a_i
J\overrightarrow {Y_{\lambda _i } } } = a_2 J\overrightarrow {Y_{\lambda _2
} } + \ldots + a_n J\overrightarrow {Y_{\lambda _n } }
\]

\begin{center}
\ldots \ldots \ldots \ldots \ldots \ldots \ldots \ldots \ldots \ldots \ldots
\ldots \ldots \ldots \ldots \ldots \ldots
\end{center}

\[
J^{(n - 2)}\overrightarrow V = \mathop {\vec {\gamma
}}\limits^{\left( {n - 2} \right)} = \sum\limits_{i = 2}^n {a_i
J^{(n - 2)}\overrightarrow {Y_{\lambda _i } } } = a_2 J^{(n -
2)}\overrightarrow {Y_{\lambda _2 } } + \ldots + a_n J^{(n -
2)}\overrightarrow {Y_{\lambda _n } }
\]

While using the eigenequation: $J\overrightarrow {Y_{\lambda _k } } =
\lambda _k \overrightarrow {Y_{\lambda _k } } $ these equations read:

\[
J\overrightarrow V = \vec {\gamma } = \sum\limits_{i = 2}^n {a_i \lambda _i
\overrightarrow {Y_{\lambda _i } } } = a_2 \lambda _2 \overrightarrow
{Y_{\lambda _2 } } + \ldots + a_n \lambda _n \overrightarrow {Y_{\lambda _n
} }
\]

\begin{center}
\ldots \ldots \ldots \ldots \ldots \ldots \ldots \ldots \ldots \ldots \ldots
\ldots \ldots \ldots \ldots \ldots \ldots
\end{center}

\[
J^{(n - 2)}\overrightarrow V = \mathop {\vec {\gamma
}}\limits^{\left( {n - 2} \right)} = \sum\limits_{i = 2}^n {a_i
J^{(n - 2)}\overrightarrow {Y_{\lambda _i } } } = a_2 \lambda _2^{n
- 2} \overrightarrow {Y_{\lambda _2 } } + \ldots + a_n \lambda _n^{n
- 2} \overrightarrow {Y_{\lambda _n } }
\]

Under the assumptions of the \textit{Tangent Linear System
Approximation}, it is obvious that the vectors $\overrightarrow V
$,$\vec {\gamma }$, \ldots , $\mathop {\vec {\gamma
}}\limits^{\left( {n - 2} \right)} $ spanned on the same eigenbasis
$\left( {\overrightarrow {Y_{\lambda _2 } } ,\overrightarrow
{Y_{\lambda _3 } } ,\ldots ,\overrightarrow {Y_{\lambda _n } } }
\right)$ are ``hypercoplanar''. This implies that

\[
\overrightarrow V \cdot \left( {\vec {\gamma } \wedge \dot {\vec
{\gamma }} \wedge \ldots \wedge \mathop {\vec {\gamma
}}\limits^{\left( {n - 2} \right)} } \right) = 0 \quad
 \Leftrightarrow
\quad \dot {\vec {X}} \cdot \left( {\ddot {\vec {X}} \wedge \dddot
{\vec {X}} \wedge \ldots \wedge \mathop {\vec {X}}\limits^{\left( n
\right)} } \right) = 0
\]

Thus, \textit{curvature of the flow} generalizes and encompasses
\textit{Tangent Linear System Approximation}.

\begin{flushright}
$\square$
\end{flushright}

\section{Geometric Singular Perturbation Theory}

Dynamical systems (\ref{eq1}) with small multiplicative parameters in one or several
components of their velocity vector field, i.e., \textit{singularly perturbed systems} may be defined as:

\begin{equation}
\label{eq73}
\left\{ {{\begin{array}{*{20}c}
 {{\vec {x}}' = \vec {f}\left( {\vec {x},\vec {z},\varepsilon } \right)} \\
 {{\vec {z}}' = \varepsilon \vec {g}\left( {\vec {x},\vec {z},\varepsilon }
\right)} \\
\end{array} }} \right.
\end{equation}

where $\vec {x} \in \mathbb{R}^m$, $\vec {z} \in \mathbb{R}^n$, $\varepsilon
\in \mathbb{R}^ + $ and the prime denotes differentiation with respect to
the independent variable $t$. The functions $\vec {f}$ and $\vec {g}$ are
assumed to be $C^\infty $ functions of $\vec {x}$, $\vec {z}$ and
$\varepsilon $ in $U\times I$, where $U$ is an open subset of
$\mathbb{R}^m\times \mathbb{R}^n$ and $I$ is an open interval containing
$\varepsilon = 0$. When $\varepsilon \ll 1$, i.e., is a small positive
number, the variable $\vec {x}$ is called \textit{fast} variable, and $\vec {z}$ is
called \textit{slow} variable. Using Landau's notation: $O\left( {\varepsilon ^k}
\right)$ represents a real polynomial in $\varepsilon $ of $k$ degree, with
$k \in \mathbb{Z}$, it is used to consider that generally $\vec {x}$ evolves
at an $O\left( 1 \right)$ rate; while $\vec {z}$ evolves at an $O\left(
\varepsilon \right)$ \textit{slow} rate. Reformulating the system (\ref{eq1}) in terms of the
rescaled variable $\tau = \varepsilon t$, we obtain:

\begin{equation}
\label{eq74}
\left\{ {{\begin{array}{*{20}c}
 {\varepsilon \dot {\vec {x}} = \vec {f}\left( {\vec {x},\vec
{z},\varepsilon } \right)} \\
 {\dot {\vec {z}} = \vec {g}\left( {\vec {x},\vec {z},\varepsilon } \right)}
\\
\end{array} }} \right.
\end{equation}

The dot $\left( \cdot \right)$ represents as the derivative with
respect to the new independent variable $\tau $. The independent
variables $t$ and $\tau $ are referred to the \textit{fast} and
\textit{slow} times, respectively, and (\ref{eq73}) and (\ref{eq74})
are called \textit{fast} and \textit{slow} system, respectively.
These systems are equivalent whenever $\varepsilon \ne 0$, and they
are labelled \textit{singular perturbation problems} when
$\varepsilon \ll 1$, i.e., is a small positive parameter. The label
\textit{singular} stems in part from the discontinuous limiting
behaviour in the system (\ref{eq73}) as $\varepsilon \to 0^ + $. In
such case, the system (\ref{eq73}) reduces to an $m$-dimensional
system called \textit{reduced fast system}, with the variable $\vec
{z}$ as a constant parameter. System (\ref{eq74}) leads to a
differential-algebraic system called \textit{reduced slow system}
which dimension decreases from $m + n$ to $n. $By exploiting the
decomposition into \textit{fast} and \textit{slow reduced systems}
the geometric approach reduced the full \textit{singularly perturbed
system} to separate lower-dimensional regular perturbation problems
in the \textit{fast} and \textit{slow} regimes, respectively.
\textit{Geometric Singular Perturbation Theory} is based on
Fenichel's assumptions [Fenichel, 1979] recalled below.

\newpage

\subsection{Assumptions}

\hspace{0.16in} \textbf{(H}$_{1}$\textbf{)} The functions $\vec {f}$
and $\vec {g}$ are $C^\infty $ functions in $U\times I$, where $U$
is an open subset of $\mathbb{R}^m\times \mathbb{R}^n$ and $I$ is an
open
interval containing $\varepsilon = 0$.\\

\textbf{(H}$_{2}$\textbf{)} There exists a set $M_0 $ that is
contained in $\{\left( {\vec {x},\vec {z}} \right):\vec {f}\left(
{\vec {x},\vec {z},0} \right) = 0\}$ such that $M_0 $ is a compact
manifold with boundary and $M_0 $ is given by the graph of a $C^1$
function $\vec {x} = \vec {X}_0 \left( \vec {z} \right)$ for $\vec
{z} \in D$, where $D \subseteq \mathbb{R}^n$ is a compact, simply
connected domain and the boundary of $D$ is an $\left( {n - 1}
\right)$ dimensional $C^\infty $ submanifold. Finally, the set $D$
is overflowing invariant with respect to (\ref{eq74}) when
$\varepsilon = 0$.\\

\textbf{(H}$_{2}$\textbf{)} $M_0 $ is normally hyperbolic relative
to the \textit{reduced fast system} and in particular it is required
for all points $\vec {p} \in M_0 $, that there are $k$ (resp. $l)$
eigenvalues of $D_{\vec {x}} \vec {f}\left( {\vec {p},0} \right)$
with positive (resp. negative) real parts bounded away from zero,
where $k + l = m$.

\subsection{Theorems}

\subsubsection*{Fenichel's persistence theorem.}

\textit{Let system (\ref{eq73}) satisfying the conditions (H}$_{1})
-- (H_{3}$\textit{). If }$\varepsilon > 0$\textit{ is sufficiently
small, then there exists a function }$\vec {X}\left( {\vec
{z},\varepsilon } \right)$\textit{ defined on }$D$\textit{ such that
the manifold }$M_\varepsilon = \{\left( {\vec {x},\vec {z}}
\right):\vec {x} = \vec {X}\left( {\vec {z},\varepsilon }
\right)\}$\textit{ is locally invariant under (\ref{eq73}).
Moreover, }$\vec {X}\left( {\vec {z},\varepsilon } \right)$\textit{
is }$C^r$\textit{ for any }$r < + \infty $\textit{, and
}$M_\varepsilon $\textit{ is }$C^rO\left( \varepsilon
\right)$\textit{ close to }$M_0 $\textit{. In addition, there exist
perturbed local stable and unstable manifolds of }$M_\varepsilon
$\textit{. They are unions of invariant families of stable and
unstable fibers of dimensions }$l$\textit{ and }$k$\textit{,
respectively, and they are }$C^rO\left( \varepsilon \right)$\textit{
close for all }$r < + \infty $\textit{ , to their counterparts.}\\

\subsubsection*{Invariance.}

Generally, Fenichel theory enables to turn the problem for explicitly
finding functions $\vec {x} = \vec {X}\left( {\vec {z},\varepsilon }
\right)$ whose graphs are locally \textit{slow invariant manifolds} $M_\varepsilon $ of system (\ref{eq73}) into
regular perturbation problem. Invariance of the manifold $M_\varepsilon $
implies that $\vec {X}\left( {\vec {z},\varepsilon } \right)$ satisfies:

\begin{equation}
\label{eq75} \varepsilon D_{\vec {z}} \vec {X}\left( {\vec
{z},\varepsilon } \right)\vec {g}\left( {\vec {X}\left( {\vec
{z},\varepsilon } \right),\vec {z},\varepsilon } \right) = \vec
{f}\left( {\vec {X}\left( {\vec {z},\varepsilon } \right),\vec
{z},\varepsilon } \right)
\end{equation}

Then, the following perturbation expansion is plugged: $\vec {X}\left( {\vec
{z},\varepsilon } \right) = \vec {X}_0 \left( \vec {z} \right) + \varepsilon
\vec {X}_1 \left( \vec {z} \right) + O\left( {\varepsilon ^2} \right)$ into
(\ref{eq75}) to solve order by order for $\vec {X}\left( {\vec {z},\varepsilon }
\right)$. The Taylor series expansion for $\vec {f}\left( {\vec {X}\left(
{\vec {z},\varepsilon } \right),\vec {z},\varepsilon } \right)$ up to terms
of order two in $\varepsilon $ leads at order $\varepsilon ^0$to

\begin{equation}
\label{eq76}
\vec {f}\left( {\vec {X}_0 \left( {\vec {z},\varepsilon } \right),\vec
{z},0} \right) = \vec {0}
\end{equation}

which defines $\vec {X}_0 \left( \vec {z} \right)$ due to the
invertibility of $D_{\vec {x}} \vec {f}$ and the \textit{implicit
function theorem}. At order $\varepsilon ^1$ we have:

\begin{equation}
\label{eq77} D_{\vec {z}} \vec {X}_0 \left( \vec {z} \right)\vec
{g}\left( {\vec {X}_0 \left( \vec {z} \right),\vec {z},0} \right) =
D_{\vec {x}} \vec {f}\left( {\vec {X}_0 \left( \vec {z} \right),\vec
{z},0} \right)\vec {X}_1 \left( \vec {z} \right) + \frac{\partial
\vec {f}}{\partial \varepsilon }\left( {\vec {X}_0 \left( \vec {z}
\right),\vec {z},0} \right)
\end{equation}

which yields $\vec {X}_1 \left( \vec {z} \right)$ and so forth.

\begin{equation}
\label{eq78} D_{\vec {x}} \vec {f}\left( {\vec {X}_0 \left( \vec {z}
\right),\vec {z},0} \right)\vec {X}_1 \left( \vec {z} \right) =
D_{\vec {z}} \vec {X}_0 \left( \vec {z} \right)\vec {g}\left( {\vec
{X}_0 \left( \vec {z} \right),\vec {z},0} \right) - \frac{\partial
\vec {f}}{\partial \varepsilon }\left( {\vec {X}_0 \left( \vec {z}
\right),\vec {z},0} \right)
\end{equation}

So, regular perturbation theory enables to build locally
\textit{slow invariant} \textit{manifolds} $M_\varepsilon $. But for
high-dimensional \textit{singularly perturbed systems} \textit{slow
invariant manifold} analytical equation determination leads to
tedious calculations.\\

Let's write the \textit{slow invariant manifold} (\ref{eq2}) defined by the \textit{curvature of the flow} as:

\begin{equation}
\label{eq79}
\phi \left( {\vec {x},\vec {z},\varepsilon } \right) = 0
\end{equation}

\begin{proof}

Plugging the perturbation expansion: $\vec {X}\left( {\vec
{z},\varepsilon } \right) = \vec {X}_0 \left( \vec {z} \right) +
\varepsilon \vec {X}_1 \left( \vec {z} \right) + O\left(
{\varepsilon ^2} \right)$ into Eq. (\ref{eq79}) to solve order by
order for $\vec {X}\left( {\vec {z},\varepsilon } \right)$. The
Taylor series expansion for $\phi \left( {\vec {X}\left( {\vec
{z},\varepsilon } \right),\vec {z},\varepsilon } \right)$ up to
terms of suitable order in $\varepsilon $ leads to the same
coefficients as those obtained above.\\

Order $\varepsilon ^0$ provides:

\[
\phi \left( {\vec {X}_0 \left( {\vec {z},\varepsilon } \right),\vec
{z},0} \right) = 0
\]

which also defines $\vec {X}_0 \left( \vec {z} \right)$ due to the
invertibility of $D_{\vec {x}} \vec {f}$ and the \textit{implicit
function theorem}. Thus, \textit{curvature of the flow} encompasses
\textit{Geometric Singular Perturbation Theory}.

\end{proof}

\end{document}